\newcommand{\df}{\dfrac}
\newcommand{\tf}{\tfrac}
\newcommand{\Res}{\operatorname{Res}}
\renewcommand{\Re}{\operatorname{Re}}
\renewcommand{\(}{\left\(}
\renewcommand{\)}{\right\)}
\renewcommand{\[}{\left\[}
\renewcommand{\]}{\right\]}
\renewcommand{\i}{\infty}
\numberwithin{equation}{section}
\theoremstyle{plain}
\newtheorem{theorem}{Theorem}[section]
\newtheorem{conjecture}{Conjecture}[section]
\newtheorem{corollary}[theorem]{Corollary}
\newtheorem{proposition}[theorem]{Proposition}
\newtheorem{example}[]{Example}
\newtheorem{remark}[]{Remark}
\def\proof{\@ifnextchar[{\@oproof}{\@nproof}}
\def\@oproof[#1][#2]{\trivlist\item[\hskip\labelsep\textit{#2 Proof of\
		#1.}~]\ignorespaces}
\def\@nproof{\trivlist\item[\hskip\labelsep\textit{Proof.}~]\ignorespaces}
\def\@tocline#1#2#3#4#5#6#7{\relax
	\ifnum #1>\c@tocdepth 
	\else
	\par \addpenalty\@secpenalty\addvspace{#2}%
	\begingroup \hyphenpenalty\@M
	\@ifempty{#4}{%
		\@tempdima\csname r@tocindent\number#1\endcsname\relax
	}{%
		\@tempdima#4\relax
	}%
	\parindent\z@ \leftskip#3\relax \advance\leftskip\@tempdima\relax
	\rightskip\@pnumwidth plus4em \parfillskip-\@pnumwidth
	#5\leavevmode\hskip-\@tempdima
	\ifcase #1
	\or\or \hskip 1em \or \hskip 2em \else \hskip 3em \fi%
	#6\nobreak\relax
	\dotfill\hbox to\@pnumwidth{\@tocpagenum{#7}}\par
	\nobreak
	\endgroup
	\fi}
\begin{document}
	\title[]{Equivalence criteria for the two--term functional equations for Herglotz--Zagier type functions}
	\author{Sumukha Sathyanarayana}
	\author{N. Guru Sharan }
	\thanks{2020 \textit{Mathematics Subject Classification.} Primary 11M32, 30D30; Secondary 39B32.\\
	\textit{Keywords and phrases.} Equivalence criteria, Mordell--Tornheim zeta function, Kronecker limit formula, modular relations, two-term functional equations.}
	\address{Department of Mathematics, Central University of Karnataka, Kadaganchi, Kalaburagi, Karnataka-585367, India.}
	\email{sumukhas@cuk.ac.in, neerugarsumukha@gmail.com}
	\address{Department of Mathematics, Indian Institute of Science, Bangalore 560012, India.}
	\email{ngurusharan@iisc.ac.in, sharanguru5@gmail.com}

	\begin{abstract}	
For any integer $a$ and non-negative integer $b$, we define a Herglotz--Zagier (HZ) type function $F_{a,b}(x)$ by an absolutely convergent series involving the Digamma function $\psi(x)$. For each such $F_{a,b}(x)$, we associate an integer weight. In the literature,  Ramanujan, Guinand, Zagier, Vlasenko-Zagier have derived two-term functional equations for some HZ type functions of positive weights. In this paper, we study a class of HZ type function associated with negative weights, and obtain their two-term functional equations. Parallelly, we associate an integer weight to the Kronecker limit type formula for the generalized Mordell--Tornheim zeta function $\Theta(r,s,t,x)$. We establish that any two-term functional equation for HZ type function is equivalent to a Kronecker limit type formula of $\Theta(r,s,t,x)$, preserving weight. As a consequence, we derive new Kronecker limit type formulas and obtain a new special value of the Mordell--Tornheim zeta function $\zeta_{\textup{MT}}(r,s,t)$.  We also obtain results of Ramanujan, Guinand, Zagier, and Vlasenko-Zagier as consequences, to show that the Mordell--Tornheim zeta function lies centrally between many known modular relations.
	\end{abstract}
	\maketitle
	\section{Introduction}\label{introduction}



Over the century, several prominent mathematicians such as Hecke \cite{hecke}, Herglotz \cite{herglotz}, Ramachandra \cite{rama}, Stark \cite{stark}, and Zagier \cite{zagier} gave important contributions towards Kronecker limit formulas, after Kronecker himself, whose results were concerned with the imaginary quadratic fields. Siegel \cite{siegel} connected the Kronecker limit formula with the theory of Dirichlet $L$-functions. 
Zagier \cite{zagier} then studied the Kronecker limit formula for real quadratic fields: for a real quadratic field $K=\mathbb{Q}(\sqrt{D})$ of discriminant $D>0$, and a narrow ideal class $\mathscr{B}$, the $\zeta$-function associated to $\mathscr{B}$ is defined for Re$(s)>1$ by
\begin{align*}
	\zeta(s,\mathscr{B}) := \sum_{\mathfrak{a}\in \mathscr{B}} \frac{1}{N(\mathfrak{a})^s},
\end{align*}
where the summation is over all integral ideals in the class $\mathscr{B}$. He proved the following formula: 
\begin{align}\label{KLFzagier}
	\lim_{s \to 1} \left( D^{s/2} \zeta(s,\mathscr{B}) - \frac{\log \epsilon}{s-1} \right) = \sum_{w} P(w,w')
\end{align}
where $\epsilon>1$, is the smallest unit of $K$ of norm $1$, where the summation on the right-hand side of \eqref{KLFzagier} is over all the elements in the set of larger roots of the equation $cw^2 + bw +a =0$ associated to each reduced primitive quadratic form $Q(x,y) =ax^2 +bx y+cy^2$ with discriminant $D= b^2-4ac$, belonging to the class $\mathscr{B}$ , i.e., $w=\frac{1}{2a}(-b+\sqrt{D})$. Note that $w'$ represents the smaller roots, i.., $w'<w$. Here, 
 $P(x,y)$ is a function of two variables $x>y>0$, defined by,
\begin{align*}
	P(x,y):= F(x)-F(y)+\textup{Li}_2\left( \frac{y}{x} \right) -\frac{\pi^2}{6} + \log\left( \frac{x}{y} \right) \left( \gamma -\frac{1}{2} \log(x-y) + \frac{1}{4} \log \left(\frac{x}{y}\right) \right).
\end{align*}
Here, $\gamma$ is Euler's constant, $\textup{Li}_2$ is the dilogarithm function and $F(x)$ is defined by the following infinite series, 
\begin{equation}\label{herglotzdef}
	F(x):=\sum_{n=1}^{\infty}\frac{\psi(nx)-\log(nx)}{n},
\end{equation} 
where $\psi(x)=\frac{\Gamma'\hspace{-0.05 cm}(x)}{\Gamma(x)}$, is the digamma function, i.e., the logarithmic derivative of the Gamma function  $\Gamma(x)$. The function $F(x)$ was studied for the first time by Herglotz \cite{herglotz}, thus, is called as the \textit{Herglotz--Zagier function}. $F(x)$ can be analytically extended to the slit complex plane $x\in\mathbb{C}\backslash(-\infty,0]$. From the asymptotic expansion of $\psi(x)$ as $x\to \infty$, 
\cite[p.~259, formula 6.3.18]{Handbook},
one can see that that the series in \eqref{herglotzdef} converges absolutely for any $x\in\mathbb{C}\backslash(-\infty,0]$. Zagier \cite[Equations (7.4) and (7.8)]{zagier} proved the following two- and three-term functional equations for $F(x)$:
\begin{align}
	&F(x)+F\left(\frac{1}{x}\right)=2F(1)+\frac{1}{2}\log^{2}(x)-\frac{\pi^2}{6x}(x-1)^2,\label{fe2}\\
	&F(x)-F(x+1)-F\left(\frac{x}{x+1}\right)=-F(1)+\textup{Li}_2\left(\frac{1}{1+x}\right),\label{fe1}
\end{align}
with \cite[Equation (7.12)]{zagier}
\begin{equation}\label{ef1}
	F(1)=-\frac{1}{2}\gamma^2-\frac{\pi^2}{12}-\gamma_1,
\end{equation}
where $\gamma$ is Euler's constant and $\gamma_1$ is the first Stieltjes constant. 

In \cite{vz}, Vlasenko and Zagier studied the higher Kronecker ``limit'' formula analogous to \eqref{KLFzagier} for integers $s \geq 2$. They show that the function $\zeta(s , \mathscr{B})$ is convergent at these values, hence the quotes for limit. This formula involves the derivatives of the higher Herglotz function $F_r(x)$, which is defined by the following absolutely convergent series:
\begin{equation}\label{higherherglotz}
	F_r(x):=\sum_{n=1}^{\infty}\frac{\psi(nx)}{n^{r-1}}\hspace{5mm}\left(x\in\mathbb{C}\backslash(-\infty,0]\right).
\end{equation} 
	They further showed that $F_r(x)$ satisfies two- and three-term functional equations \cite[Proposition 4]{vz}, analogous to \eqref{fe2} and \eqref{fe1}, given by,
	\begin{align}
		&F_{r+1}(x) + (-x)^{r-1} F_{r+1}\left( \frac{1}{x} \right) = \zeta(r+1) \left( (-x)^{r} -\frac{1}{x} \right) - \sum_{\ell=1}^{r} \zeta(\ell) \zeta(r-\ell+1) (-x)^{\ell-1}, \label{vz2term} \\
		&F_{r+1}(x) - F_{r+1}(x+1) + (-x)^{r-1} F_{r+1} \left( \frac{x+1}{x} \right) = \zeta(r+1) \left( \frac{(-x)^{r}}{x+1}-\frac{1}{x} \right) - \sum_{\ell=1}^{r} \zeta_{\textup{D}}(r-\ell+1, \ell) (-x)^{\ell-1}, \label{vz3term}
	\end{align}
	where the double zeta function $\zeta_{\textup{D}}(s_1,s_2)$ is defined \cite[Section 1.1]{vz} by,
	\begin{align}
		\zeta_{\textup{D}}(s_1,s_2) = \sum_{n=1}^{\infty} \sum_{m=n+1}^{\infty} \frac{1}{m^{s_1} n^{s_2}}, \hspace{1 cm} \textup{(for $s_1\geq2$ and $s_2\geq1$).} \label{doublezetadef}
	\end{align}
	As noted in \cite[p. 28]{vz}, the divergent values $\zeta(1)$ and $\zeta(1,r)$ should be interpreted as $\gamma$ and $-(\zeta(r,1)+\zeta(r+1)-\gamma \zeta(r))$\footnote{The interpretation of $\zeta(1,r)$ given in \cite[p. 28]{vz} has a minus sign missing. The convention given here is the correct one.}, respectively, here and everywhere below.
	
Several functions similar to the Herglotz-Zagier function $F(x)$ have been studied in the literature. We list below few results due to Ramanujan and Guinand. In general, for $\Re(x)>0$, a transformation of the form $F(x)= F(1/x)$ is called a \textit{modular relation}. Equivalently, we can also write it as form $F(-1/z)=F(z)$, where $z\in\mathbb{H}$, the upper half complex plane. Ramanujan \cite{bcbad} had recorded the following interesting Modular relation, on page 220 of Lost Notebook:
for $x>0$, let us denote
\begin{equation*}
	\phi(x):=\psi(x)+\frac{1}{2x}-\log(x).
\end{equation*}
Then, for $x>0$, 
\begin{align}
	\sqrt{x}\left\{\dfrac{\gamma-\log(2\pi x)}{2 x}+\sum_{n=1}^{\i}\phi(n x)\right\}
	&=\sqrt{\frac{1}{x}}\left\{\df{x(\gamma-\log\left(\frac{2\pi}{x}\right))}{2}+\sum_{n=1}^{\i}\phi\left(\frac{n}{x}\right)\right\} \notag \\
	&=-\df{1}{\pi^{3/2}}\int_0^{\i}\left|\Xi\left(\df{1}{2}y\right)\Gamma\left(\df{-1+iy}{4}\right)\right|^2
	\df{\cos\left(\tf{1}{2}y\log x\right)}{1+y^2}\, dy,  \label{w1.26}
\end{align}
where
\begin{align*}
	\xi(s)&:=\frac{1}{2}s(s-1)\pi^{-\frac{s}{2}}\Gamma\left(\frac{s}{2}\right)\zeta(s),\\
	\Xi(y)&:=\xi\left(\frac{1}{2}+i y\right),
\end{align*}
denote the Riemann's functions, as in \cite{titch}. Dixit \cite[Theorem 4.1]{Analogues} obtained a generalization of the above result in the variable $z$, with the Hurwitz zeta function $\zeta(z,x)$ in the place of $\psi(x)$ in the first equality, the modular relation.

Guinand \cite{apg3} gave modular relations which are analogous to the first equality in \eqref{w1.26} involving the derivatives of $\psi(x)$. For $p\in\mathbb{N}$ and $p>2$, he showed that,
	\begin{align}\label{guinand2}
		x^{\frac{p}{2}}\sum_{j=1}^{\infty}\psi^{(p-1)}(1+jx)=\bigg(\frac{1}{x}\bigg)^{\frac{p}{2}}\sum_{j=1}^{\infty}\psi^{(p-1)}\bigg(1+\frac{j}{x}\bigg).
	\end{align}
Also, analogously, for the first derivative, he proved the identity,
	\begin{align}\label{guinand1}
		x\sum_{j=1}^{\infty}\left(\psi'(1+jx)-\frac{1}{jx}\right)-\frac{1}{2}\log(x)=	\frac{1}{x}\sum_{j=1}^{\infty}\left(\psi'\bigg(1+\frac{j}{x}\bigg)-\frac{x}{j}\right)-\frac{1}{2}\log\bigg(\frac{1}{x}\bigg).
	\end{align}
	
	In a general setting, we define a \textit{Herglotz--Zagier type function} $F_{a,b}(x)$ (referred to as a HZ type function henceforth)  to be the following:
	\begin{align}
		F_{a,b}(x) := \sum_{n=1}^{\infty} \frac{1}{n^a} (\psi^{(b)}(nx) + \alpha(nx)), \label{Fdefn}
	\end{align}
	where, $a$ is any integer, $b$ is any non-negative integer representing the order of derivative for $\psi(x)$, and $\alpha(x)$ is the function which is necessary to make the summand absolutely convergent. For any fixed pair $a$ and $b$, one can obtain $\alpha(x)$ explicitly by looking at the asymptotic of $\psi^{(b)}(y)$ as $y \to \infty$, for instance, written down in \cite[Lemma 8.2]{dss} and \cite[Equation (2.1)]{dss}.
	
	For each HZ type function $F_{a,b}$ as defined in \eqref{Fdefn}, let us assign an integer \textit{weight} to it as follows:
	\begin{align}
		\text{Weight}\left(F_{a,b}\right):= a+b+1. \label{weight}
	\end{align}
	In Table \ref{table}, we list a few examples of the above discussed HZ type functions with $\alpha(x)$ explicitly written down, along with their respective weights. To the best of our knowledge, all the HZ type functions thus far studied in the literature have been associated with positive weights alone. The authors, through this paper, hereby initiate the study on corresponding functions of non-positive weights. 
	\renewcommand{\arraystretch}{1.7}
	\begin{table}[h!]
		\begin{center}
			\begin{tabular}{ | c| c| c| } 
				\hline
				Herglotz--Zagier type function& Reference & Weight \\
				\hline
				\hline
				$\sum_{n=1}^{\infty}\frac{1}{n} \left( \psi(nx)-\log(nx) \right)$ & Zagier \eqref{fe2} & $2$ \\
				\hline
				$\sum_{n=1}^{\infty}\frac{1}{n^r} \left( \psi(nx) \right),\quad (r\geq2)$& Vlasenko-Zagier \eqref{vz2term}  & $r+1$ \\ 
				\hline
				$\sum_{n=1}^{\infty} \left( \psi(nx)+\frac{1}{2nx}-\log(nx) \right)$& Ramanujan \eqref{w1.26}& $1$ \\ 
				\hline
				$\sum_{n=1}^{\infty}\left(\psi^{(p-1)}(1+nx)\right),\quad (p \geq 3)$& Guinand \eqref{guinand2}& $p$ \\
				\hline 
				$\sum_{n=1}^{\infty}\left(\psi'(1+nx)-\frac{1}{nx}\right)$&Guinand \eqref{guinand1}& $2$ \\
				\hline
				$\sum_{n=1}^{\infty} n \left( \psi(nx)+\frac{1}{2nx}-\log(nx)+\frac{1}{12n^2x^2} \right)$ & New \eqref{newfe} & $0$\\ [0.5ex]
				\hline  
			\end{tabular}
			\caption{List of various Herglotz--Zagier type functions along with their weight.}
			\label{table}
		\end{center}
	\end{table}
	
Before we look at the new findings of this paper, let us denote the region
\begin{align}
	\mathscr{D}:=\big\{(r,s,t) \in \mathbb{C}^3 \ \big| \Re(r+t)>1, \Re(s+t)>1 , \Re(r+s+t)>2\big\}. \label{Dregion}
\end{align}
In the region $\mathscr{D},$ the Mordell--Tornheim zeta function $\zeta_{\textup{MT}} (r,s,t)$, as named in \cite{matsumoto-bonn}, is defined by the absolutely convergent series  
\begin{align}
	\zeta_{\textup{MT}} (r,s,t):=\sum_{n=1}^{\infty} \sum_{m=1}^{\infty} \frac{1}{n^{r} m^{s} (n+m)^{t}}.
\end{align}
Matsumoto \cite[Theorem 1]{matsumoto} used the Mellin-Barnes method to show that $\zeta_{\textup{MT}}(r,s,t)$ can be meromorphically continued to the entire $\mathbb{C}^3$ space with all its singularities lying on the hyperplanes of $\mathbb{C}^3$ defined by any of the following equations,
\begin{equation} \label{pole conditions}
	\left.
	\hspace{5 cm}
	\begin{aligned}
		&r+t=1-\ell, \\
		&s+t=1-\ell, \\
		&r+s+t=2,\\
	\end{aligned}
	\hspace{4 cm} \right\}
\end{equation}
for any $\ell \in \mathbb{N}\cup\{0\}$. 

The behavior of $\zeta_{\textup{MT}} (r,s,t)$ around its singularities is largely unexplored. In a recent work with Dixit \cite{dss2}, the authors considered its generalization
\begin{align}
	\Theta(r,s,t,x):=\sum_{n=1}^{\infty}  \sum_{m=1}^{\infty}\frac{1}{n^{r}m^{s}(n+mx)^{t}}, \label{Tdef}
\end{align}
for $x>0$ and $(r,s,t) \in \mathscr{D}$ as defined in \eqref{Dregion}. Clearly, $\Theta(r,s,t,1) =\zeta_{\textup{MT}} (r,s,t)$. We can use the Mellin-Barnes method to obtain the meromorphic continuation of $\Theta(r,s,t,x)$ to the entire $\mathbb{C}^3$ and show that its singularities is identical to that of $\zeta_{\textup{MT}}(r,s,t)$ as mentioned in \eqref{pole conditions}. One can easily observe that $\Theta(r,s,t,x)$ satisfies the following properties for $x>0$ and $(r,s,t) \in \mathscr{D}$, which we call \textit{split} and \textit{inversion}, respectively:
\begin{align}
	&\Theta(r,s,t,x) =  \Theta(r-1,s,t+1,x)+ x\Theta(r,s-1,t+1,x), \label{Tsplit}  \\
	&	\Theta(r,s,t,x)= x^{-t} \Theta(s,r,t,\tfrac{1}{x}).  \label{Tinv}
\end{align} 

As noted in the beautiful survey article by Zagier \cite[Section 8]{zagiersurvey}, we can define a weight to $\zeta_{\textup{MT}}(r,s,t)$, for each fixed tuple, to be the value $r+s+t$. We extend the notion and define the weight of $\Theta(r,s,t,x)$ to be the value $r+s+t$, which remains independent of $x$. 

One of the goals of this paper is to develop a broader understanding of the aforementioned two-term functional equations \eqref{w1.26}, \eqref{guinand2}, \eqref{guinand1}, \eqref{fe2}, and \eqref{vz2term}, through the properties of $\Theta(r,s,t,x)$, which facilitates us to come up a broad and unified method to understand these results. In Section \ref{SectionMTPer}, we explain the phenomena in-detail and list the observations in Table \ref{table2}, for reader's ease.

In \cite[Theorem 1.1]{dss2}, it was shown that, as $t \to 0$,
\begin{align}\label{principal part equation}
	\Theta(1,1,t,x)=\frac{2}{t^2}+\frac{2\gamma-\log(x)}{t}+\gamma^2-\gamma\log(x)-\frac{\pi^2}{6}+O_{x}(|t|).
\end{align}
Results of this kind did not exist in the literature prior to this, even with $x=1$. 
The authors have further showed the following Kronecker limit type formula for $\Theta(r,r,t,x)$, for any integer $r>1$, in the third variable $t$, around $t=1-r$, analogous to \eqref{principal part equation}, in \cite[Equation (2.25)]{ss2}:
	\begin{align}\label{KLF-r}
		\Theta(r,r,t,x) &= \frac{\zeta(r)\left(1 + x^{r-1} \right)}{(t-(1-r))}  +  x^{r-1} \zeta(r)(\gamma-\log(x)) + \gamma \zeta(r)   \notag \\
	&\quad  + x^{r-1} \sum_{\substack{k=1}}^{r-2} \binom{r-1}{k}  \frac{1}{ x^{k}} \zeta(r-k)\zeta(k+1)   + O_{x}(|t-(1-r)|).
	\end{align}
The singularities considered for the above Laurent series are of type $r+t=1$ alone, where $\Theta(r,r,t,x)$ has a first order pole, unlike in \eqref{principal part equation}, where it has a second order pole and the singularity is due to all the three conditions mentioned in \eqref{pole conditions}. Hence, it is clear that Theorem \ref{KLF-r} is not a generalization of \eqref{principal part equation}, but an analogue.

We can further extend the notion of the weight to each Kronecker limit type formula for $\Theta(r,s,t,x)$, to be defined by the sum $r+s+t$ of the coordinates around which the formula is defined. For instance, the weight of \eqref{principal part equation} is $2$, and the weight of \eqref{KLF-r} is $r+1$. 
 
	For $\textup{Re}(z)>0, z\neq1,$ and $x\in\mathbb{C}\backslash(-\infty,0]$, the authors in \cite{dss2} have defined the following \textit{generalized Herglotz function} by
\begin{align*}
	\Phi(z, x):=\sum_{n=1}^{\infty} \frac{1}{n}\left( \zeta(z,nx)- \frac{(nx)^{1-z}}{z-1} \right),
\end{align*}	 
where $\zeta(z, x)$ is the well-known Hurwitz zeta function. In \cite[Theorem 1.3]{dss2}, the following decomposition identity of $\Theta(r,s,t,x)$ has been proved:
	for \textup{Re}$(z)>1$ and $x>0$,
	\begin{align}\label{decomposition}
		\Phi(z, x)+x^{1-z}\Phi\left(z, \frac{1}{x}\right)= \Theta(1,1,z-1,x) - \frac{(1+x^{1-z})}{z-1}\zeta(z)+(x+x^{-z})\zeta(z+1).
	\end{align}
It can be seen that $\lim_{z\to 1} \Phi(z,x) = -F(x)$ as shown in \cite[Lemma 3.2]{dss2}. Also, letting $z \to 1$ in \eqref{decomposition} and making use of \eqref{principal part equation} leads us to \eqref{fe2}. This hints towards new Kronecker limit formulas, a generalization each of \eqref{KLFzagier} and \cite[Theorem 2]{vz}, with $\Phi(z,x)$ appearing in the summand on the right-hand side. Due to the decomposition identity \eqref{decomposition}, the properties of $\Phi(z,x)$ can be obtained through that of $\Theta(1,1,z-1,x)$.
	
As the main result of this paper, we essentially obtain the following \textit{equivalence criteria} for the two-term functional equations of HZ type functions $F_{a,b}(x)$ with $a\ne 0$. The class $F_{0,b}(x)$ is dealt with separately in Section \ref{SectionMTPer}. 

\begin{theorem}\label{MAIN}
	For any fixed integer $k$, the two-term functional equation (or the modular relation) satisfied by a Herglotz--Zagier type function of weight $k$ is equivalent to a Kronecker limit type formula of the generalized Mordell--Tornheim zeta function $\Theta(r,s,t,x)$, of weight $k$.
\end{theorem}

\begin{remark}
	\emph{	Bochner \cite{Bochner} established the equivalence between a Hecke-type functional equation and a modular relation of certain $L$-functions, a perspective expanded by Chandrasekharan and Narasimhan \cite{KC}. In parallel, Theorem \ref{MAIN} shows that the two-term functional equation for HZ functions, which is a modular-type relation, is equivalent to the Kronecker limit formulas for the Mordell--Tornheim zeta function. The key difference lies in the analytic side: the classical framework features a functional equation, whereas here it is a limit formula capturing the principal part of the Laurent series expansion, perhaps, along with the constant term.}
\end{remark}

We prove Theorem \ref{MAIN} in Section \ref{Section3}. Since Theorem \ref{MAIN} is \textit{existential} in nature, and does not describe the nature of either the two-term functional equation or the Kronecker limit type formula, we make it more explicit through the below listed five illustrations:  

\begin{example}\label{ex1} \rm  Weight $2$ invariant: the two-term functional equation \eqref{fe2} satisfied by $F(x)$ is equivalent to the Kronecker limit type formula \eqref{principal part equation} for $\Theta(1,1,t,x)$ around $t=0$.
\end{example}

\begin{example}\label{ex2}  \rm
	Weight $r+1$ invariant: for any integer $r>1$, the two-term functional equation \eqref{vz2term} satisfied by $F_{r+1}(x)$ is equivalent to the Kronecker limit type formula \eqref{KLF-r} for $\Theta(r,r,t,x)$ around $t=1-r$.
\end{example}

\begin{example}\label{ex3}  \rm
	Weight $0$ invariant: the two-term functional equation \eqref{newfe} satisfied by $F_{-1,0}(x)$ is equivalent to the Kronecker limit type formula \eqref{from SS2} for $\Theta(0,0,t,x)$ around $t = 0$. 
\end{example}

\begin{example}\label{ex4} \rm
	Weight $3$ invariant: the two-term functional equation satisfied by $F_{-1,3}(x)$ is equivalent to the Kronecker limit type formula for $\Theta(0,0,t,x)$ around $t=3$. 
\end{example}

\begin{example}\label{ex5} \rm 
	Weight $-2$ invariant: the two-term functional equation satisfied by $F_{-3,0}(x)$ is equivalent to the Kronecker limit type formula for $\Theta(-2,-1,t,x)+ x^{1-t} \Theta\left(-2,-1, t,\frac{1}{x}\right)$ around $t=1$.
\end{example}
\noindent
We highlight below a few key observations from the above examples:
\begin{itemize}
	\item 
	In Examples \ref{ex1} and \ref{ex2}, both the functional equation of the chosen HZ type function along with the associated Kronecker limit type formula have been known before, where the equivalence alone is the new observation. In Examples \ref{ex3}, \ref{ex4}, and \ref{ex5}, the functional equations did not exist in the literature before. In Section \ref{Sec2}, we prove a new class of such two-term functional equations for HZ type functions of weight $\leq 1$. 
	
	\item Also, another stark difference between the three instances is the nature of singularities: while a second order pole and a first order pole features in Example \ref{ex1} and \ref{ex2}, respectively, but a special value features in Examples \ref{ex3} and \ref{ex5} (refer to \cite[p. 5]{ss2} for more details). Example \ref{ex4} features in the analytic region of $\Theta(r,s,t,x)$.
	
	\item The Kronecker limit type formula involves a combination of $\Theta(r,s,t,x)$ in the Example \ref{ex5} alone. The Example \ref{ex4} is the odd one containing a HZ type function with $b \ne 0$.
\end{itemize}
We explicitly describe the above listed examples in Section \ref{Section3}. As an application of the equivalence: we obtain a new functional equation using a known special value in Corollary \ref{newFEc}, and, we conversely also obtain a new special value using a new functional equation proved in Theorem \ref{newFE}, in Corollary \ref{newSP}. In Section \ref{SectionMTPer}, we tabulate the connections between the modular relations discovered by Guinand, Ramanujan, Herglotz, Zagier and Vlasenko-Zagier, and thereby show that the theory of the Mordell--Tornheim zeta function lies centrally between them.  Theorem \ref{MAIN} can be seen as the big picture behind these identities.

\subsection{The space of Herglotz--Zagier type functions}
Before we discuss further developments on the space of HZ type functions, we define the Multiple zeta values. The multi-summand
\begin{align*}
	\zeta(\textbf{s}) := \zeta(s_1,s_2, \ldots, s_{r}) = \sum_{n_1 >n_2 > \cdots >n_{r} \geq 1} \frac{1}{n_1^{s_1} n_2^{s_2} \cdots n_{r}^{s_{r}}}
\end{align*}
is convergent for $s_{i} \in \mathbb{N}$ and $s_1\geq2$. These admissible special values of $\zeta(\textbf{s})$ are called \textit{Multiple zeta values} (referred to as MZVs henceforth). The MZV $\zeta(\textbf{s})$ is assigned a \textit{weight} $|\textbf{s}|= s_1 +s_2 + \cdots + s_{r}$ and a \textit{depth} $r$. Refer to \cite[Section 1.2]{GilFresan} for finer details. Let us define \textbf{Z} to be the $\mathbb{Q}$-vector space generated by all MZVs, i.e.,
\begin{align*}
	\textup{\textbf{Z}} := \langle 1, \zeta(2), \zeta(3), \zeta(2,1), \zeta(4), \ldots \rangle_{\mathbb{Q}}.
\end{align*} 
Also, let $\textbf{Z}_k\subsetneq\textbf{Z}$ be the subspace spanned by the MZVs of weight $k$. Trivially, $\textbf{Z}_{0}=\mathbb{Q}$ and $\textbf{Z}_{1}=\{0\}$. It can be easily seen that few different MZVs satisfy algebraic relations. It can further be shown that all the relations between MZVs known so far are weight-preserving (or \textit{homogeneous}), as explained in detail \cite{zudilin}. The above fact leads us towards a conjecture of Zagier \cite{zagierannals}: it is unknown if the weight defines a \textit{grading} on $\textbf{Z}$, i.e., $	\textup{$\textbf{Z}$} \stackrel{?}{=} \bigoplus_{k\geq0} \textup{$\textbf{Z}_k$}$. 

For each $k$, there are only finitely many MZVs of weight $k$, and hence $\textbf{Z}_k$ is finite dimensional. There are many relations between MZVs (of a fixed weight). For instance, we have the \textit{shuffle relations} \cite[Section 1.3.2]{GilFresan}:
\begin{align*}
	\zeta(j) \zeta(k-j) = \sum_{\ell=2}^{k-1} \left( \binom{\ell-1}{j-1} + \binom{\ell-1}{k-j-1} \right) \zeta(\ell , k-\ell). 
\end{align*} 
It is hence conjectured \cite{zagierannals} that dim$_{\mathbb{Q}}$ $\textbf{Z}_k \stackrel{?}{=} d_k$ (defined recursively by $d_k = d_{k-2}+d_{k-3}$, along with the initial conditions $d_0=1$ and $d_1=0$).

Let $\mathbb{Q}((x))$ be the field of formal Laurent series over $\mathbb{Q}$. Let $\textbf{F}$ be the vector space spanned by all HZ type functions over $\mathbb{Q}((x))$. For any fixed integer $k$, let $\textbf{F}_k$ be the subspace of $\textbf{F}$, spanned by HZ type functions of weight $k$.

\begin{conjecture}\label{Conj1.1}
	The subspaces \textup{$\textbf{F}_k$} are in direct sum, i.e., \textup{$\textbf{F}$} is a graded linear space,
	\begin{align*}
		\textup{$\textbf{F}$} = \bigoplus_{k \in \mathbb{Z}} \textup{$\textbf{F}_k$}.
	\end{align*}
\end{conjecture}

\begin{remark}
	\rm The above mentioned Conjecture \ref{Conj1.1} on $\textbf{F}$ should be seen analogous to that on $\textbf{Z}$ \cite{zagierannals}. Unlike $\textbf{Z}_k$, the weight here can be negative integers too, i.e., $\textbf{F}$ is expected to be a direct sum over all integers. 
\end{remark}

Analogous to the MZVs, the HZ type functions (of a fixed weight) also satisfy many relations. For instance, as given in \cite[Equation 1.7]{ss2}:
\begin{align*}
	-\sum_{m=1}^{\infty} \frac{\psi'(mx+1)}{m^2} + 	\sum_{m=1}^{\infty} \frac{\psi'(\frac{m}{x}+1)}{m^2} + \frac{2}{x} \sum_{m=1}^{\infty} \frac{\gamma+\psi(mx+1)}{m^3} = \zeta^2(2).
\end{align*} 
While it is clear that $\textbf{Z}_k$ is finitely spanned, on the other hand, one can see that $\textbf{F}_k$ is a span of infinitely many HZ type functions of weight $k$, which are likely to be linearly independent over $\mathbb{Q}((x))$. Since, as shown in \cite[Theorem 3.3]{ss2}, we can obtain infinitely many relations between HZ type functions (of a fixed weight), one can still expect dim$_{\mathbb{Q}((x))}$ $\textbf{F}_k$ to be finite. Thus, it will be interesting to study dim$_{\mathbb{Q}((x))} \textbf{F}_k$, analogous to that of dim$_{\mathbb{Q}}  \textbf{Z}_k$. A study of the space $\textbf{F}_k$ could hint towards an alternative way to learn about $\textbf{Z}_k$.


\section{New functional equations}\label{Sec2}

In this section, we prove new two-term functional equations for a class of HZ type functions of non-positive integer weights, using contour integrals.

\begin{theorem}\label{newFE}
	Let $k$ be any non-negative integer. Consider the HZ type function $F_{-k,0}(x)$ of weight $1-k$ defined explicitly by: 
	\begin{align}\label{lowerherglotz}
		F_{-k,0}(x)&:= \sum_{
			n=1}^{\infty} n^k \left(\psi(nx)-\log(n x)+\frac{1}{2nx}+ \sum_{m=1}^{\lfloor \frac{k+1}{2} \rfloor } \frac{ B_{2m}}{2 m (nx)^{2m}}\right).
	\end{align}
	It satisfies the following two-term functional equation:
		\begin{align}\label{sym}
			x^{\frac{k+1}{2}} \left( F_{-k,0}(x) +\frac{(-1)^k }{2x^{k+1}}  \sum_{m=0}^{k+1} P_{m,-k}\left(x \right)\right) =	(-1)^k \left(\frac{1}{x}\right)^{\frac{k+1}{2}} \left( F_{-k,0}\left(\frac{1}{x}\right)  +\frac{(-1)^k x^{k+1}}{2}  \sum_{m=0}^{k+1} P_{m,-k}\left(\frac{1}{x}\right) \right),
		\end{align}
where
\begin{align}\label{residualterm}
P_{m,-k}(x):=\begin{cases} 
	-\gamma \zeta(-k) + \log(x)  \zeta(-k) + \zeta'(-k),	 & \textup{if}~ m=0,\\
		(-1)^{m+1} \zeta(m-k)\zeta(1-m) x^m,  & \textup{if}~ 1\le m \le k, \\
	(-1)^k x^{k+1} (\gamma \zeta(-k)+ \log(x) \zeta(-k)- \zeta'(-k)),	 & \textup{if}~ m=k+1 .
	\end{cases}
\end{align}
\end{theorem}

\begin{proof}

As $x \to \infty $, from \cite[Formula 5.11.2]{nist}, we have 
\begin{align}\label{psiasym}
	\psi(x) \sim \log(x) -\frac{1}{2 x} - \sum_{m=1}^{\infty} \frac{B_{2m}}{2mx^{2m}}.
\end{align}
where $B_n$ is the \textit{n}$^{\textup{th}}$ Bernoulli number, with the convention that $B_1=\frac{1}{2}$. Let us define the following notation for the line integral:
\begin{align*}
	\int_{(c)} := \int_{c-i \infty}^{c+i \infty}.
\end{align*} 
As given in \cite[Formula 7.3]{ober}, for $0<c<1,$ we have 
\begin{align}
	\psi(x+1)-\log(x)= \frac{1}{2 \pi i} \int_{(c)} \frac{-\pi \zeta(1-s)}{\sin(\pi s)} x^{-s} ds. 
\end{align}
Therefore, by moving the contour to $k+1<c_k<k+2$, we get
\begin{align}\label{contref1}
	\frac{1}{2 \pi i} \int_{(c_k)} \frac{-\pi \zeta(1-s)}{\sin(\pi s)} x^{-s} ds= \psi(x)-\log(x)+\frac{1}{2x}+ \sum_{m=1}^{\lfloor \frac{k+1}{2} \rfloor } \frac{ B_{2m}}{2 m x^{2m}}. 
\end{align}
Since, upon using the special value $\zeta(-n)=-\frac{B_{n+1}}{n+1}$ for any natural number $n$, one can see,
\begin{align}
	\Res_{s=n}  \frac{-\pi \zeta(1-s)}{\sin(\pi s)} x^{-s} = \frac{(-1)^{n} B_n}{ n x^n}.
\end{align}
From \eqref{psiasym}, it is clear that, as $n \to \infty$, 
\begin{align*}
	\psi(nx)-\log(n x)+\frac{1}{2nx}+ \sum_{m=1}^{\lfloor \frac{k+1}{2} \rfloor } \frac{ B_{2m}}{2 m (nx)^{2m}} = O_{x}\left( \frac{1}{n^{k+2}} \right).
\end{align*} 
Therefore replacing $x$ by $nx$ in \eqref{contref1}, multiply the whole expression by $n^k$ and sum over the set of all natural numbers and use it in the definition of $F_{-k,0}(x)$ as follows
\begin{align}\label{contref2}
		F_{-k,0}(x)&= \sum_{
		n=1}^{\infty} n^k \left(\psi(nx)-\log(n x)+\frac{1}{2nx}+ \sum_{m=1}^{\lfloor \frac{k+1}{2} \rfloor } \frac{ B_{2m}}{2 m (nx)^{2m}}\right) \nonumber \\ 
		& = \sum_{n=1}^{\infty} n^k \left(\frac{1}{2 \pi i} \int_{(c_k)} \frac{-\pi \zeta(1-s)}{\sin(\pi s)} (nx)^{-s} ds\right)\nonumber\\
	&=	\frac{1}{2 \pi i} \int_{(c_k)} \frac{-\pi \zeta(1-s)\zeta(s-k)}{\sin(\pi s)} x^{-s} ds,
\end{align}
in the last step we have used the series representation of $\zeta(s-k)$ as $\Re(s)>k+1$.
Let us make the change of variable $w=k+1-s$ in the integral in \eqref{contref2} (and hence $1-s=w-k$ and $1-w=s-k$). Since $k+1<\Re(s)<k+2$, we have $-1<\Re(w)<0$. Hence, we get,
\begin{align*}
		F_{-k,0}(x)	&= \frac{(-1)^k}{x^{k+1}}	\frac{1}{2 \pi i} \int_{(c_{-2})} \frac{-\pi \zeta(w-k)\zeta(1-w)}{\sin(\pi w)} x^{w} dw
\end{align*}
where $-1<c_{-2}<0$. Now, move the line of integration back to $k+1<c_k<k+2$, to get
\begin{align}\label{final}
		F_{-k,0}(x)& =  \frac{(-1)^k}{x^{k+1}} \left(	\frac{1}{2 \pi i} \int_{(c_{k})}
	 \frac{-\pi \zeta(w-k)\zeta(1-w)}{\sin(\pi w)} x^{w} dw - P(x)\right)\nonumber \\
	&=  \frac{(-1)^k}{x^{k+1}} \left(	F_{-k,0}\left(\frac{1}{x}\right) - P(x)\right),
\end{align}
where $P(x)$ is the residue of the integrand at poles, and we have used \eqref{contref2} in the last step. At $1 \leq m \leq k$, since the zeta factors do not have a pole, we have
\begin{align}
\textup{Res}_{w=m} \frac{-\pi \zeta(w-k)\zeta(1-w)}{\sin(\pi w)} x^{w}  = (-1)^{m+1} \zeta(m-k)\zeta(1-m) x^m.
\end{align}
At $w=0$ and $w=k+1$, we encounter a second order pole, with residues
	$ -\gamma \zeta(-k) + \log(x)  \zeta(-k) + \zeta'(-k)$ and $(-1)^k x^{k+1} (\gamma \zeta(-k)+ \log(x) \zeta(-k)- \zeta'(-k))$,
respectively. Therefore, as defined in \eqref{residualterm}, one can write
\begin{align}\label{residue}
P(x)=	\sum_{m=0}^{k+1} P_{m,-k}(x).
\end{align}
Substituting \eqref{residue} in \eqref{final}, and simplify to get
\begin{align}\label{nonsym}
	x^{\frac{k+1}{2}} F_{-k,0}(x) =	(-1)^k\frac{F_{-k,0}\left(\frac{1}{x}\right) }{x^{\frac{k+1}{2}}} 	+ (-1)^{k+1}\frac{1}{x^{\frac{k+1}{2}}} \sum_{m=0}^{k+1} P_{m,-k}(x)  .
\end{align}
Replace $x$ by $\frac{1}{x}$ in \eqref{nonsym} to get
\begin{align}\label{nonsym2}
\frac{F_{-k,0}\left(\frac{1}{x}\right) }{x^{\frac{k+1}{2}}} 	 =	(-1)^k x^{\frac{k+1}{2}} F_{-k,0}(x)	+ (-1)^{k+1} x^{\frac{k+1}{2}} \sum_{m=0}^{k+1} P_{m,-k}\left(\frac{1}{x} \right).
\end{align}
Therefore, from \eqref{nonsym} and \eqref{nonsym2}, one can see that,
\begin{align}\label{halfsum}
(-1)^{k+1}\frac{1}{x^{\frac{k+1}{2}}} \sum_{m=0}^{k+1} P_{m,-k}(x) =x^{\frac{k+1}{2}} \sum_{m=0}^{k+1} P_{m,-k}\left(\frac{1}{x} \right).
\end{align}
Using \eqref{halfsum} in \eqref{nonsym}, we get the required two-term functional equation after simplification.
\end{proof}
As the first special case, we note below the following corollary of the Theorem \ref{newFE}.
\begin{corollary}
	The first equality of Ramanujan's identity \eqref{w1.26} holds.
\end{corollary}
\begin{proof}
	Put $k=0$ in Theorem \ref{newFE} and simplify.
\end{proof}
	
	\section{Equivalence Theorems}\label{Section3}

We start this section by proving a recursive formula for $\Theta(r,s,t,x)$. The motivation for this result originated from a recursive result for $\zeta_{\textup{MT}}(r,s,t)$ obtained by Huard, Williams and Zhang \cite[Equation (1.6)]{huard}. We use the specific case of Theorem \ref{molty} with $r=s=n$, as stated in Corollary \ref{Trrcor}, to prove Theorem \ref{MAIN}. Theorem \ref{molty} can be directly seen as an application of the binomial theorem written in the form
\begin{align*}
	\frac{1}{a^r b^s} = \sum_{\ell=0}^{N} \binom{N}{\ell} \frac{1}{a^{r-(N-\ell)} b^{s-\ell} (a+b)^{N}},
\end{align*}
with the substitutions $a=n$ and $b=mx$. For sake of completion, we give a proof using induction, very briefly. 

		\begin{theorem}\label{molty}
		For any $r,s,t \in \mathbb{R}$ such that $r+s+t>2$, $r+t>1$ and $s+t>1$ and for any $n \in \mathbb{N}$, we have,
		\begin{align}
			\Theta(r,s,t,x) = \sum_{\ell=0}^{n} \binom{n}{\ell} x^{\ell} \Theta(r-n+\ell,s-\ell,t+n,x). \label{molltype1}
		\end{align}
	\end{theorem}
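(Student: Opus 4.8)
The plan is to establish \eqref{molltype1} by induction on $n$, using nothing beyond the split relation \eqref{Tsplit} together with Pascal's rule. The case $n=0$ is the tautology $\Theta(r,s,t,x)=\Theta(r,s,t,x)$, and the case $n=1$ is exactly \eqref{Tsplit}. So one supposes that \eqref{molltype1} holds for some $n\ge 1$ and all $(r,s,t)$ satisfying the three inequalities of the statement.

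The preliminary observation needed is that, under those inequalities, every argument occurring on the right-hand side of \eqref{molltype1} lies in the region of absolute convergence $\mathscr{D}$ of \eqref{Dregion}, so that \eqref{Tsplit} is legitimately applicable there: for $0\le\ell\le n$ one has $(r-n+\ell)+(t+n)=r+\ell+t>1$ since $\ell\ge 0$ and $r+t>1$; also $(s-\ell)+(t+n)=(s+t)+(n-\ell)>1$ since $n-\ell\ge 0$ and $s+t>1$; and $(r-n+\ell)+(s-\ell)+(t+n)=r+s+t>2$. Applying \eqref{Tsplit} to each summand of the inductive hypothesis therefore yields
\begin{align*}
\Theta(r-n+\ell,s-\ell,t+n,x)=\Theta(r-(n+1)+\ell,\,s-\ell,\,t+(n+1),x)+x\,\Theta(r-(n+1)+(\ell+1),\,s-(\ell+1),\,t+(n+1),x).
\end{align*}
Substituting this into \eqref{molltype1} and re-indexing $\ell\mapsto\ell-1$ in the family of terms that carry the extra factor of $x$, the coefficient of $x^{\ell}\,\Theta(r-(n+1)+\ell,\,s-\ell,\,t+(n+1),x)$ collapses to $\binom{n}{\ell}+\binom{n}{\ell-1}=\binom{n+1}{\ell}$ by Pascal's rule, with the boundary values $\ell=0$ and $\ell=n+1$ absorbed by the conventions $\binom{n}{-1}=\binom{n}{n+1}=0$. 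This is precisely \eqref{molltype1} with $n$ replaced by $n+1$, closing the induction.

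There is no genuine obstacle here; the only matters demanding a little care are checking that the defining inequalities of $\mathscr{D}$ survive each shift $(r,s,t)\mapsto(r-1,s,t+1)$ and $(r,s,t)\mapsto(r,s-1,t+1)$ arising in the recursion — so that \eqref{Tsplit} is never invoked on one of the polar hyperplanes \eqref{pole conditions} — and the index-shift bookkeeping that produces the Pascal identity. Alternatively one may dispense with the formal induction: iterating \eqref{Tsplit} exactly $n$ times records, over the $2^{n}$ binary choices of whether to decrement $r$ or to decrement $s$ at each step, a term with third argument $t+n$; collecting the $\binom{n}{\ell}$ paths that decrement $s$ a total of $\ell$ times (each such path contributing a factor $x^{\ell}$) gives \eqref{molltype1} directly.
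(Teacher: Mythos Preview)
Your proposal is correct and follows essentially the same route as the paper: induction on $n$ with the split relation \eqref{Tsplit} supplying the passage from $n$ to $n+1$ and Pascal's rule collapsing the coefficients. Your version is in fact slightly more careful than the paper's, since you verify explicitly that every triple $(r-n+\ell,\,s-\ell,\,t+n)$ remains in the region $\mathscr{D}$, whereas the paper only remarks at the end that ``$\Theta$ by definition is well defined at the arguments in all of the above steps''; your alternative binary-path reading of the iterated split is a nice combinatorial gloss not present in the paper.
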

	\begin{proof}
		For $n=1$, the statement holds true, as can be seen from \eqref{Tsplit}.
		We now assume the identity is true for all $j<n$. We start with
		\begin{align*}
			\Theta(r,s,t,x) = \sum_{\ell=0}^{n-1} \binom{n-1}{\ell} x^{\ell} \Theta(r-(n-1)+\ell,s-\ell,t+n-1,x).
		\end{align*}
		Use \eqref{Tsplit} for the summand on the right-hand side of the above equation to get,
		\begin{align*}
			&\Theta(r,s,t,x) = \sum_{\ell=0}^{n-1} \binom{n-1}{\ell} x^{\ell} \left( \Theta(r-n+\ell,s-\ell,t+n,x)+ x \Theta(r-(n-1)+\ell,s-\ell-1,t+n,x) \right)
		\end{align*}
		Replace the variable of summation $\ell$ by $\ell-1$ in the second term of the summand on the right-hand side to get,
		\begin{align*}
			\Theta(r,s,t,x) &= \sum_{\ell=0}^{n-1} \binom{n-1}{\ell} x^{\ell}  \Theta(r-n+\ell,s-\ell,t+n,x)+ \sum_{\ell=1}^{n} \binom{n-1}{\ell-1} x^{\ell} \Theta(r-n+\ell,s-\ell,t+n,x)\\
			&= \sum_{\ell=0}^{n} \binom{n}{\ell}  x^{\ell}  \Theta(r-n+\ell,s-\ell,t+n,x),
		\end{align*}
		where we have used the property $\binom{n-1}{\ell} + \binom{n-1}{\ell-1} = \binom{n}{\ell}$. Observe that $\Theta$ by definition is well defined at the arguments in all of the above steps. Hence, by induction, \eqref{molltype1} holds true for all $n \in \mathbb{N}$.
	\end{proof}
	\begin{corollary}\label{Trrcor}
		For $r,t \in \mathbb{N}$ such that $r+t>1$ and $r>1$, and for any $x>0$,
		\begin{align*}
			\Theta(r,r,t,x) = \sum_{\ell=0}^{r} \binom{r}{\ell} x^{\ell} \Theta(\ell,r-\ell,t+r,x).
		\end{align*}
	\end{corollary}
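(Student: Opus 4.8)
The plan is to read off this statement as the immediate specialization of Theorem \ref{molty}. First I would set $s=r$ and $n=r$ in the identity \eqref{molltype1}: its right-hand side then becomes $\sum_{\ell=0}^{r}\binom{r}{\ell}x^{\ell}\,\Theta(r-r+\ell,\,r-\ell,\,t+r,\,x)=\sum_{\ell=0}^{r}\binom{r}{\ell}x^{\ell}\,\Theta(\ell,\,r-\ell,\,t+r,\,x)$, which is exactly the claimed formula, so nothing beyond this substitution is required.

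Next I would check that the hypotheses of Theorem \ref{molty} hold under the standing assumptions $r,t\in\mathbb{N}$, $r>1$, $r+t>1$. With $s=r$ the three inequalities $r+t>1$, $s+t>1$, $r+s+t>2$ reduce to $r+t>1$ (given) together with $2r+t>2$; and since $r\ge 2$ we get $2r+t=r+(r+t)>r+1\ge 3>2$. Hence Theorem \ref{molty} applies verbatim with $(s,n)=(r,r)$, and specializing to $x>0$ as in the statement yields the corollary.

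The only point worth a word is that the extreme terms $\ell=0$ and $\ell=r$ involve $\Theta(0,r,t+r,x)$ and $\Theta(r,0,t+r,x)$, carrying a zero exponent in the first or second factor. As already noted in the proof of Theorem \ref{molty}, every $\Theta$ occurring there is evaluated at a point where the defining double series converges absolutely; in the present case the relevant conditions $t+r>1$ and $t+2r>2$ are again consequences of $r+t>1$ and $r\ge 2$, so no appeal to meromorphic continuation is needed and the identity holds termwise. There is accordingly no genuine obstacle here: the corollary is a bookkeeping specialization isolated for later use in Proposition \ref{KLFtoVZ}, and hence in Theorem \ref{Eq theorem}.
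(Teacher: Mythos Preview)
Your proposal is correct and matches the paper's own proof, which is the one-line observation ``Put $s=r$ and $n=r$ in \eqref{molltype1} to get this result.'' Your additional verification that the hypotheses of Theorem~\ref{molty} are met and that the extreme terms $\ell=0,r$ lie in the region of absolute convergence is a welcome elaboration, but the core argument is identical.
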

	\begin{proof}
		Put $s=r$ and $n=r$ in \eqref{molltype1} to get this result.
	\end{proof}
We are now  equipped to prove Theorem \ref{MAIN}.
\begin{proof}[Theorem \textup{\ref{MAIN}}][]\
	We give the proof in two cases. Let $a$ be any non-zero fixed integer and $b$ be any non-negative fixed integer.
	
	\noindent \textbf{Case I}: For $a\geq1$.	We know from Corollary \ref{Trrcor}, for Re$(w)>1-a$, that
	\begin{align*}
		\Theta(a,a,w,x) = \sum_{\ell=0}^{a} \binom{a}{\ell} x^{\ell} \Theta(\ell,a-\ell,w+a,x).
	\end{align*}
	Separate the first and last terms from the sum on the right-hand side to get,
	\begin{align}
		\Theta(a,a,w,x) = \Theta(0,a,w+a,x) + \sum_{\ell=1}^{a-1} \binom{a}{\ell} x^{\ell} \Theta(\ell,a-\ell,w+a,x) + x^{a} \Theta(a,0,w+a,x). \label{N1}
	\end{align}
	Moreover, upon adding and subtracting suitable terms from the summand (as explained in Section \ref{introduction}), we can see that,
	\begin{align}
		\Theta(0,a,w+a,x)\bigg{|}_{w \to -a+b+1} \equiv \pm F_{a,b}(x), \label{N2}
	\end{align}
	possibly,  upto a rational multiple, modulo finitely many special values of $\zeta(s)$. Hence, similarly,
	\begin{align}
		x^{a} \Theta(a,0,w+a,x)\bigg{|}_{w \to -a+b+1} = x^{-w} \Theta \left(0,a,w+a,\frac{1}{x}\right)\bigg{|}_{w \to -a+b+1} \equiv \pm x^{-(-a+b+1)} F_{a,b} \left( \frac{1}{x} \right). \label{N3}
	\end{align}
	Finally, from \eqref{N1}, \eqref{N2} and \eqref{N3}, one can see that,
	\begin{align*}
		\lim_{w\to -a+b+1} \left( \Theta(a,a,w,x) - \sum_{\ell=1}^{a-1} \binom{a}{\ell} x^{\ell} \Theta(\ell,a-\ell,w+a,x)  \right) \equiv \pm  F_{a,b}(x) \pm x^{-(-a+b+1)} F_{a,b} \left( \frac{1}{x} \right),
	\end{align*}
	which is the required equivalence associated with weight $a+b+1$.
	
	\noindent \textbf{Case II}: For $a<0$. We know from Theorem \ref{molty}, for Re$(w)>2$, that
	\begin{align*}
		\Theta(0,0,w,x) = \sum_{\ell=0}^{-a} \binom{-a}{\ell} x^{\ell}  \Theta(a+\ell,-\ell,w-a,x).
	\end{align*}
	Separate the first and last term to get,
	\begin{align}
		\Theta(0,0,w,x) = \Theta(a,0,w-a,x) + \sum_{\ell=1}^{-a-1} \binom{-a}{\ell} x^{\ell}  \Theta(a+\ell,-\ell,w-a,x) + x^{-a} \Theta(0,a,w-a,x). \label{N4}
	\end{align}
	Upon adding and subtracting suitable summands (as explained in Section \ref{introduction}) and letting $w \to a+b+1$, we can get,
	\begin{align}
		\Theta(0,a,w-a,x) \bigg|_{w\to a+b+1} \equiv \pm F_{a,b}(x), \label{N5} 
	\end{align}
	possibly, as explained in the previous case, upto to a rational multiple, modulo finitely many special values of $\zeta(s)$. Hence,
	\begin{align}
		\Theta(a,0,w-a,x) \bigg|_{w\to a+b+1} \equiv \pm x^{-b-1} F_{a,b}\left(\frac{1}{x}\right). \label{N6}
	\end{align}
	Finally, from \eqref{N4}, \eqref{N5} and \eqref{N6}, one can see that,
	\begin{align*}
		\lim_{w\to a+b+1} \left( \Theta(0,0,w,x) - \sum_{\ell=1}^{-a-1} \binom{-a}{\ell} x^{\ell}  \Theta(a+\ell,-\ell,w-a,x) \right) =  \pm x^{-b-1} F_{a,b}\left(\frac{1}{x}\right) \pm x^{-a} F_{a,b}(x),
	\end{align*}
	which is the required equivalence associated with weight $a+b+1$.
\end{proof}

Our next aim is to make the five examples listed in Section \ref{introduction} explicit. We break its details into smaller propositions for ease of reading. As mentioned earlier, it has been shown in \cite[Corollary 3.5]{dss2} that \eqref{principal part equation} implies the two-term functional equation \eqref{fe2} of $F(x)$. In the below proposition we prove its converse implication, which together complete Example \ref{ex1}.

\begin{proposition}\label{ZtoKLF}
	For $x>0$, let $F(x) $ and $\Theta(1,1,t,x)$ be as defined in \eqref{herglotzdef} and \eqref{Tdef}, respectively. The two-term functional equation of $F(x)$ given in \eqref{fe2} implies the Kronecker limit formula \eqref{principal part equation} for $\Theta(1,1,t,x)$ around $t=0$.
\end{proposition}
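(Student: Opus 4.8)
The plan is to reverse the implication of \cite[Corollary 3.5]{dss2}, using the decomposition identity \eqref{decomposition} as the bridge between $F(x)$ and $\Theta(1,1,\cdot,x)$. Putting $z=t+1$ in \eqref{decomposition} (which is valid for $\Re(t)>0$) and solving for $\Theta(1,1,t,x)$ gives
\begin{equation*}
\Theta(1,1,t,x) = \Phi(t+1,x) + x^{-t}\,\Phi\!\left(t+1,\tfrac{1}{x}\right) + \frac{1+x^{-t}}{t}\,\zeta(t+1) - \left(x+x^{-t-1}\right)\zeta(t+2),
\end{equation*}
and the proof will then consist of expanding each of the three pieces on the right around $t=0$ and collecting the result.

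First I would treat the $\Phi$-combination. We know $\lim_{z\to 1}\Phi(z,x)=-F(x)$ by \cite[Lemma 3.2]{dss2}, so this combination tends to $-F(x)-F(1/x)$ as $t\to 0$; moreover it is meromorphic near $t=0$, since by the displayed identity it equals $\Theta(1,1,t,x)$ (meromorphic in $t$) minus two meromorphic elementary terms. A meromorphic function with a finite limit at a point is analytic there, so
\begin{equation*}
\Phi(t+1,x)+x^{-t}\,\Phi\!\left(t+1,\tfrac{1}{x}\right) = -F(x)-F\!\left(\tfrac{1}{x}\right)+O(|t|)\qquad (t\to 0).
\end{equation*}
At this stage the hypothesis \eqref{fe2} enters: combined with the evaluation \eqref{ef1} of $F(1)$ and the identity $(x-1)^2/x = x+x^{-1}-2$, it rewrites the constant as
\begin{equation*}
-F(x)-F\!\left(\tfrac{1}{x}\right) = \gamma^2+2\gamma_1-\tfrac{1}{2}\log^2 x+\tfrac{\pi^2}{6}\!\left(x+x^{-1}\right)-\tfrac{\pi^2}{6}.
\end{equation*}

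Next I would Taylor-expand the two elementary terms about $t=0$, starting from the Laurent expansion $\zeta(s)=(s-1)^{-1}+\gamma-\gamma_1(s-1)+O((s-1)^2)$ and from $x^{-t}=1-t\log x+\tfrac{t^2}{2}\log^2 x+O(|t|^3)$. The term $\frac{1+x^{-t}}{t}\zeta(t+1)$ supplies the full principal part $\frac{2}{t^2}+\frac{2\gamma-\log x}{t}$ together with the constant $-2\gamma_1-\gamma\log x+\tfrac{1}{2}\log^2 x$, whereas $\left(x+x^{-t-1}\right)\zeta(t+2)$, having no pole at $t=0$, contributes only the constant $\tfrac{\pi^2}{6}(x+x^{-1})$. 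Substituting the three expansions back into the displayed identity and adding, the $\gamma_1$-terms cancel, the $\tfrac{1}{2}\log^2 x$-terms cancel, and the $\tfrac{\pi^2}{6}(x+x^{-1})$-terms cancel, leaving exactly \eqref{principal part equation}.

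Everything after setting up the displayed identity is routine bookkeeping; the one point that deserves care is the passage from the known limit $\Phi(t+1,x)\to -F(x)$ to the error estimate $\Phi(t+1,x)=-F(x)+O(|t|)$, since it is precisely this that makes the remainder in \eqref{principal part equation} genuinely $O(|t|)$ rather than merely $o(1)$. As indicated above, this is supplied by the meromorphy of $\Theta(1,1,t,x)$ near $t=0$ (or, equivalently, by the local analyticity of $\Phi(z,x)$ at $z=1$, which follows from its defining series).
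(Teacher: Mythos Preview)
Your proposal is correct and follows essentially the same route as the paper's own proof: both invoke the decomposition identity \eqref{decomposition} with $z=t+1$, use $\lim_{z\to1}\Phi(z,x)=-F(x)$, substitute \eqref{fe2} together with \eqref{ef1}, and then recover \eqref{principal part equation} by Laurent-expanding the elementary $\zeta$-terms and $x^{-t}$. The only differences are cosmetic---the paper moves $\tfrac{1+x^{-t}}{t}\zeta(t+1)$ to the left before taking the limit, whereas you expand all three right-hand pieces separately---and your version is slightly more explicit in justifying the $O(|t|)$ remainder via meromorphy.
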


\begin{proof}
	Rearrange the terms in \eqref{decomposition} as follows
	\begin{align*}
		\Theta(1,1,z-1,x) - \frac{(1+x^{1-z})}{z-1}\zeta(z) = \Phi(z, x)+x^{1-z}\Phi\left(z, \frac{1}{x}\right) - (x+x^{-z})\zeta(z+1).
	\end{align*}
	Now tend limit $z\to 1$ to get,
	\begin{align*}
		\lim_{z\to 1} \left( \Theta(1,1,z-1,x) - \frac{(1+x^{1-z})}{z-1}\zeta(z) \right) = -F(x) - F\left(\frac{1}{x}\right) - \left(x + \frac{1}{x}\right) \zeta(2),
	\end{align*}
	where we have used $\lim_{z\to 1} \big(\zeta(z,y)-\frac{1}{z-1}\big) = -\psi(y)$ and L'Hospital's rule to obtain the limit on the right-hand side. Use \eqref{fe2} to get,
	\begin{align*}
		\lim_{z\to 1} \left( \Theta(1,1,z-1,x) - \frac{(1+x^{1-z})}{z-1}\zeta(z) \right) = - \left( 2F(1)+\frac{1}{2}\log^{2}(x)-\frac{\pi^2}{6x}(x-1)^2 \right) - \left(x + \frac{1}{x}\right) \zeta(2).
	\end{align*}
	Hence, using \eqref{ef1}, in a small neighbourhood of $z=1$, we have
	\begin{align*}
		\Theta(1,1,z-1,x) - \frac{(1+x^{1-z})}{z-1}\zeta(z) = \gamma^2 -\frac{\pi^2}{6} -\frac{\log^2(x)}{2} +\gamma_1 +  O_{x}(|z-1|).
	\end{align*} 
	Using the Laurent series expansions of $x^{1-z}$ and $\zeta(z)$ around $z=1$, and replacing $z-1=t$ we get \eqref{principal part equation}.
\end{proof} 
 The following two Propositions \ref{KLFtoVZ} and \ref{VZtoKLF} together elaborate Example \ref{ex2}.

	\begin{proposition}\label{KLFtoVZ}
	Let $x>0$ and $r \in \mathbb{N}, r>1$. Let $F_r(x) $, and $\Theta(r,r,t,x)$ be as defined in \eqref{higherherglotz}, and \eqref{Tdef} respectively. The Kronecker limit formula \eqref{KLF-r}  for $\Theta(r,r,t,x)$ around $t=1-r$ implies the two-term functional equation of $F_r(x)$ given in \eqref{vz2term}.
	\end{proposition}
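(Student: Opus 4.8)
The plan is to read off \eqref{vz2term} by comparing the Laurent expansions at $t=1-r$ of the two sides of Corollary \ref{Trrcor} (taken with $s=n=r$), using \eqref{KLF-r} only to fix the left side. Put $s=t+r$, so that $t\to1-r$ amounts to $s\to1$; by meromorphic continuation the identity of Corollary \ref{Trrcor} holds near $t=1-r$ and reads
\begin{equation*}
  \Theta(r,r,t,x)=\Theta(0,r,s,x)+x^{r}\Theta(r,0,s,x)+\sum_{\ell=1}^{r-1}\binom{r}{\ell}x^{\ell}\,\Theta(\ell,r-\ell,s,x).
\end{equation*}
By the pole conditions \eqref{pole conditions}, for $1\le\ell\le r-1$ and $r>1$ each summand $\Theta(\ell,r-\ell,s,x)$ is holomorphic near $s=1$, whereas the two endpoint terms have simple poles there and hence carry the pole of $\Theta(r,r,t,x)$.

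For the endpoint terms I would use $\Theta(0,r,s,x)=\sum_{m\ge1}m^{-r}\zeta(s,1+mx)$ together with $\zeta(s,1+mx)=\zeta(s,mx)-(mx)^{-s}$ and the regularizing splitting
\begin{equation*}
  \Theta(0,r,s,x)=\sum_{m\ge1}\frac{1}{m^{r}}\Bigl(\zeta(s,mx)-\frac{(mx)^{1-s}}{s-1}\Bigr)+\frac{x^{1-s}\zeta(r+s-1)}{s-1}-x^{-s}\zeta(r+s).
\end{equation*}
The first series converges locally uniformly in $s$ near $1$ (by Euler--Maclaurin its $m$-th term is $O(m^{-r-\Re(s)})$); since $\lim_{s\to1}\bigl(\zeta(s,a)-\tfrac{a^{1-s}}{s-1}\bigr)=\log a-\psi(a)$, its value at $s=1$ is $\sum_{m\ge1}m^{-r}\bigl(\log(mx)-\psi(mx)\bigr)=-F_r(x)+\zeta(r)\log x-\zeta'(r)$. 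Expanding the two remaining terms about $s=1$ gives, for $r\ge2$,
\begin{equation*}
  \Theta(0,r,s,x)=\frac{\zeta(r)}{s-1}-F_r(x)-\frac{\zeta(r+1)}{x}+O(s-1).
\end{equation*}
For the term $x^{r}\Theta(r,0,s,x)$ I would apply the inversion \eqref{Tinv} to write $x^{r}\Theta(r,0,s,x)=x^{r-s}\,\Theta(0,r,s,1/x)$, insert the expansion above with $x$ replaced by $1/x$, and expand $x^{r-s}=x^{r-1}-x^{r-1}(\log x)(s-1)+O((s-1)^{2})$.

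For the middle terms, partial fractions in the summation index not scaled by $x$ (the formally divergent contribution of $\zeta(1)$ cancelling against $\sum_{n}(n+mx)^{-1}$ to leave $-\psi(mx)-\gamma$) give, for $1\le\ell\le r-1$,
\begin{equation*}
  \Theta(\ell,r-\ell,1,x)=\frac{(-1)^{\ell+1}\bigl(F_r(x)+\gamma\zeta(r)\bigr)}{x^{\ell}}+\frac{(-1)^{\ell+1}\zeta(r+1)}{x^{\ell+1}}+\sum_{j=2}^{\ell}\frac{(-1)^{\ell-j}\zeta(j)\zeta(r-j+1)}{x^{\ell-j+1}}.
\end{equation*}
Substituting the endpoint expansions and this formula into the decomposition above, the pole part comes out as $\zeta(r)(1+x^{r-1})/(s-1)$, in agreement with \eqref{KLF-r}; equating the remaining constant terms with the constant term of \eqref{KLF-r} yields an identity relating $F_r(x)$, $F_r(1/x)$, powers of $x$, and the values $\gamma,\zeta(r),\zeta(r+1),\{\zeta(j)\zeta(r-j+1)\}_{2\le j\le r-1}$. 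Using $\sum_{\ell=0}^{m}\binom{r}{\ell}(-1)^{\ell}=(-1)^{m}\binom{r-1}{m}$ to collapse the sums over $\ell$, the $\log x$ terms cancel; after multiplying through by $(-1)^{r}$ the coefficient of $F_r(x)$ is $1$ and that of $F_r(1/x)$ is $-(-1)^{r}x^{r-1}=(-x)^{r-1}$; and the ``diagonal'' double sum $\sum_{2\le j\le r-1}\binom{r-1}{j-1}\bigl(x^{r-j}-x^{j-1}\bigr)\zeta(j)\zeta(r-j+1)$ vanishes since its summand is anti-invariant under $j\mapsto r+1-j$. What survives is exactly \eqref{vz2term}, with $\zeta(1)$ interpreted as $\gamma$.

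The one genuinely delicate point is this final bookkeeping: matching the term $x^{r-1}\sum_{k=1}^{r-2}\binom{r-1}{k}x^{-k}\zeta(r-k)\zeta(k+1)$ of \eqref{KLF-r} with the double sum produced by the middle terms after the reindexing $k=j-1$, and carrying the signs $(-1)^{\ell}$ and $(-1)^{r}$ correctly so that the coefficient of $F_r(1/x)$ lands on $(-x)^{r-1}$ rather than $x^{r-1}$. The analytic steps — interchanging $\lim_{s\to1}$ with $\sum_m$ in the endpoint expansion and rearranging the partial-fraction series — are routine given the absolute and uniform convergence estimates already available for $\Theta$ and $F_r$.
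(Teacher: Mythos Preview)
Your proof is correct and follows the same overall skeleton as the paper: start from Corollary~\ref{Trrcor}, strip off the two endpoint terms, express $\Theta(0,r,s,x)$ via Hurwitz zeta (getting the pole $\zeta(r)/(s-1)$ and the constant $-F_r(x)-\zeta(r+1)/x$), treat $x^{r}\Theta(r,0,s,x)$ by inversion, and then compare constant terms against \eqref{KLF-r}. Where you genuinely diverge is in the handling of the middle sum $S(r,t)=\sum_{\ell=1}^{r-1}\binom{r}{\ell}x^{\ell}\Theta(\ell,r-\ell,t+r,x)$. The paper rewrites $S(r,t)$ by a telescoping trick based on the split identity \eqref{Tsplit}, collapsing it to $(1+(-1)^{r})x^{r-1}\Theta(r-1,1,t+r,x)$ plus terms with third argument $t+r-1$; after $t\to1-r$ the latter become pure products $\zeta(\ell+1)\zeta(r-\ell)$ and only the single surviving term has to be evaluated (via inversion) as $\gamma\zeta(r)+\sum_{m}\psi(m/x+1)m^{-r}$. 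You instead evaluate each $\Theta(\ell,r-\ell,1,x)$ directly by partial fractions in $n$, so that \emph{every} middle term carries a copy of $F_r(x)$, and then collapse the $\ell$-sum with $\sum_{\ell=0}^{m}(-1)^{\ell}\binom{r}{\ell}=(-1)^{m}\binom{r-1}{m}$ and the anti-symmetry $j\mapsto r+1-j$.

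Your route is arguably more elementary (no need for \eqref{Tsplit} or the telescoping rearrangement) and makes transparent why only $F_r(x)$, not $F_r(1/x)$, appears from the interior; the paper's route keeps the intermediate expressions simpler at the cost of that extra manipulation. One small remark: the parenthetical ``to leave $-\psi(mx)-\gamma$'' has the sign reversed relative to the formula you actually use (the pairing $\sum_{n}(n^{-1}-(n+mx)^{-1})=\psi(1+mx)+\gamma$ gives $+\psi(mx)+\gamma$ after absorbing the $1/(mx)$ into your $\zeta(r+1)$ term), but your displayed formula for $\Theta(\ell,r-\ell,1,x)$ is correct.
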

			\begin{proof}
			Let $r \in \mathbb{N}$, $r\ne1$ be arbitrary and $t>1-r$. Let $x>0$. From Corollary \ref{Trrcor}, we have,
			\begin{align*}
				\Theta(r,r,t,x) = \sum_{\ell=0}^{r} \binom{r}{\ell} x^{\ell} \Theta(\ell,r-\ell,t+r,x).
			\end{align*}
			Separate out the first and the last term from the sum to get,
			\begin{align*}
				\Theta(r,r,t,x) =\Theta(0,r,t+r,x) + \sum_{\ell=1}^{r-1} \binom{r}{\ell} x^{\ell} \Theta(\ell,r-\ell,t+r,x) + x^r \Theta(r,0,t+r,x).
			\end{align*}
			Use inversion \eqref{Tinv} for the third term on the right-hand side to get,
			\begin{align}
				\Theta(r,r,t,x) =\Theta(0,r,t+r,x) + S(r,t) + x^{-t} \Theta(0,r,t+r,\tfrac{1}{x}), \label{rlevel}
			\end{align}
			where $S(r,t)$ is defined by
			\begin{align*}
				S(r,t):= \sum_{\ell=1}^{r-1} \binom{r}{\ell} x^{\ell} \Theta(\ell,r-\ell,t+r,x).
			\end{align*}
			We now simplify $S(r,t)$ as follows:
			\begin{align*}
				S(r,t)&= \sum_{\ell=1}^{r-1} \left( \sum_{i=1}^{\ell} (-1)^{\ell-i} \binom{r}{i} -  \sum_{j=1}^{\ell-1} (-1)^{\ell-j} \binom{r}{j} \right) x^{\ell} \Theta(\ell,r-\ell,t+r,x) \\
				&= \sum_{\ell=1}^{r-1} \sum_{i=1}^{\ell} (-1)^{\ell-i} \binom{r}{i}  x^{\ell} \Theta(\ell,r-\ell,t+r,x) - \sum_{\ell=1}^{r-1} \sum_{j=1}^{\ell-1} (-1)^{\ell-j} \binom{r}{j} x^{\ell} \Theta(\ell,r-\ell,t+r,x).
			\end{align*}
			Replace $\ell$ by $\ell+1$ in the second term of the right-hand side to get,
			\begin{align*}
				S(r,t)&= \sum_{\ell=1}^{r-1} \sum_{i=1}^{\ell} (-1)^{\ell-i} \binom{r}{i}  x^{\ell} \Theta(\ell,r-\ell,t+r,x) + \sum_{\ell=0}^{r-2} \sum_{j=1}^{\ell} (-1)^{\ell-j} \binom{r}{j} x^{\ell+1} \Theta(\ell+1,r-\ell-1,t+r,x).
			\end{align*}
			Since $\ell=0$ makes the summand zero, hence the second term can be rewritten as,
			\begin{align*}
				S(r,t)&= \sum_{\ell=1}^{r-1} \sum_{i=1}^{\ell} (-1)^{\ell-i} \binom{r}{i}  x^{\ell} \Theta(\ell,r-\ell,t+r,x) + \sum_{\ell=1}^{r-2} \sum_{j=1}^{\ell} (-1)^{\ell-j} \binom{k}{j} x^{\ell+1} \Theta(\ell+1,r-\ell-1,t+r,x).
			\end{align*}
			Separate the $\ell=r-1$ term from the first term on the right-hand side to get,
			\begin{align}
				S(r,t)&= x^{r-1} \Theta(r-1,1,t+r,x) \sum_{i=1}^{r-1}(-1)^{r-i+1}  \binom{r}{i} \notag \\
				&\quad+\sum_{\ell=1}^{r-2} \sum_{i=1}^{\ell} (-1)^{\ell-i} \binom{r}{i}  x^{\ell} \Theta(\ell,r-\ell,t+r,x) + \sum_{\ell=1}^{r-2} \sum_{j=1}^{\ell} (-1)^{\ell-j} \binom{k}{j} x^{\ell+1} \Theta(\ell+1,r-\ell-1,t+r,x)\notag\\
				&= x^{r-1} \Theta(r-1,1,t+r,x) \sum_{i=1}^{r-1}(-1)^{r-i+1}  \binom{r}{i}\notag\\
				&\quad +\sum_{\ell=1}^{r-2} \sum_{i=1}^{\ell} (-1)^{\ell-i} \binom{r}{i}  x^{\ell} \bigg( \Theta(\ell,r-\ell,t+r,x) +  x \Theta(\ell+1,r-\ell-1,t+r,x) \bigg)\notag\\
				&= x^{r-1} \Theta(r-1,1,t+r,x) \sum_{i=1}^{r-1}(-1)^{r-i+1}  \binom{r}{i}+\sum_{\ell=1}^{r-2} \sum_{i=1}^{\ell} (-1)^{\ell-i} \binom{r}{i}  x^{\ell}  \Theta(\ell+1,r-\ell,t+r-1,x), \label{srt simp}
			\end{align}
			where we have used \eqref{Tsplit} in the last step. We note the following fact obtained by the Binomial theorem,
			\begin{align*}
				\sum_{i=1}^{r-1}(-1)^{r-i+1}\binom{r}{i}  \buildrel \rm \emph{i} \rightarrow \emph{r}-\emph{i} \over =  -\sum_{i=1}^{r-1}(-1)^{i}\binom{r}{i} =1+(-1)^r = \left\{
				\begin{array}{ll}
					0, & \text{if r is odd}, \\
					2, & \text{if r is even}. \\
				\end{array} 
				\right.
			\end{align*}
			Also, we know that 
			\begin{align*}
				\sum_{i=1}^{\ell}  (-1)^{\ell-i} \binom{r}{i}  = 	(-1)^{\ell} \sum_{i=1}^{\ell}  (-1)^{i}  \binom{r}{i} = (-1)^{\ell} \left(-1+	\sum_{i=0}^{\ell}  (-1)^{i}  \binom{r}{i} \right) = (-1)^{\ell+1} + \binom{r-1}{\ell},
			\end{align*}
			where we have used that $\sum_{i=0}^{\ell}  (-1)^{i}  \binom{r}{i} = (-1)^{\ell}\binom{r-1}{\ell}$.  Hence, using the two facts mentioned above, \eqref{srt simp} becomes,
			\begin{align}
				S(r,t)= (1+(-1)^r)x^{r-1} \Theta(r-1,1,t+r,x) +\sum_{\ell=1}^{r-2} \left( (-1)^{\ell+1} + \binom{r-1}{\ell} \right)  x^{\ell}  \Theta(\ell+1,r-\ell,t+r-1,x). \label{srt0eqn}
			\end{align}
			Take the limit $t \to 1-r$ in \eqref{srt0eqn} to get,
			\begin{align}
				\lim_{t\to 1-r} S(r,t)= (1+(-1)^r)x^{r-1} \lim_{t\to 1-r}  \Theta(r-1,1,t+r,x) +\sum_{\ell=1}^{r-2} \left( (-1)^{\ell+1} + \binom{r-1}{\ell} \right)  x^{\ell} \zeta(\ell+1) \zeta(r-\ell), \label{srt1eqn}
			\end{align}
			since $\Theta(a,b,0,x)=\zeta(a)\zeta(b)$ for any $a,b>1$. The limit on right-hand side is dealt as follows,
			\begin{align}
				\lim_{t\to 1-r}  \Theta(r-1,1,t+r,x) &= \lim_{t \to 1-r}  x^{-t-r}\Theta(1,r-1,t+r,\tfrac{1}{x}) \notag \\ 
				&= \frac{1}{x} \sum_{m=1}^{\infty} \frac{1}{m^{r-1}} \sum_{n=1}^{\infty}  \frac{1}{n (n+\frac{m}{x})} \notag\\
				&= \frac{1}{x} \sum_{m=1}^{\infty} \frac{1}{m^{r-1}} \sum_{n=1}^{\infty} \left( \frac{\frac{x}{m}}{n} - \frac{\frac{x}{m}}{n+\frac{m}{x}} \right)  \notag\\
				&=  \sum_{m=1}^{\infty} \frac{1}{m^{r}} \left( \gamma + \psi(\tfrac{m}{x}+1) \right)  \notag\\
				&= \gamma \zeta(r) + \sum_{m=1}^{\infty} \frac{\psi(\frac{m}{x}+1)}{m^{r}} \label{Tlim}
			\end{align}
			Hence, from \eqref{srt1eqn} and \eqref{Tlim}, we get,
			\begin{align}
				\lim_{t\to 1-r} S(r,t) &= (1+(-1)^r)x^{r-1} \left( \sum_{m=1}^{\infty} \frac{\psi(\frac{m}{x}+1)}{m^{r}} +\gamma \zeta(r) \right) +\sum_{\ell=1}^{r-2} \left( (-1)^{\ell+1} + \binom{r-1}{\ell} \right)  x^{\ell} \zeta(\ell+1) \zeta(r-\ell). \label{srtLim}
			\end{align}
			We also need the following evaluation,
			\begin{align}
				\Theta(0,r,t+r,x)&= \sum_{n=1}^{\infty} \sum_{m=1}^{\infty} \frac{1}{m^r (n+mx)^{t+r}} \notag \\
				&= \sum_{m=1}^{\infty} \frac{\zeta(t+r,mx+1)}{m^r} \notag \\
				&= \sum_{m=1}^{\infty} \frac{\zeta(t+r,mx+1) -\frac{1}{t-1+r}}{m^r} + \frac{\zeta(r)}{t-1+r}. \label{Teval}
			\end{align}
			Hence, we can also see that,
			\begin{align}
				x^{-t} \Theta(0,r,t+r,\tfrac{1}{x}) = x^{-t} \left( \sum_{m=1}^{\infty} \frac{\zeta(t+r,\tfrac{m}{x}+1) -\frac{1}{t-1+r}}{m^r} + \frac{\zeta(r)}{t-1+r} \right). \label{Teval1}
			\end{align} 
			Substitute \eqref{Teval} and \eqref{Teval1} in \eqref{rlevel} to get,
			\begin{align*}
				&\Theta(r,r,t,x) - \frac{\zeta(r) (1+x^{-t})}{t-1+r} \\
				&= \sum_{m=1}^{\infty} \frac{\zeta(t+r,mx+1) -\frac{1}{t-1+r}}{m^r}  + S(r,t) + x^{-t}  \sum_{m=1}^{\infty} \frac{\zeta(t+r,\tfrac{m}{x}+1) -\frac{1}{t-1+r}}{m^r} .
			\end{align*}
			Take the limit of both sides of the above equation as $t \to 1-r$, and use \eqref{srtLim} to get
			\begin{align}
				&\lim_{t \to 1-r} \left( \Theta(r,r,t,x) - \frac{\zeta(r) (1+x^{-t})}{t-1+r} \right) \notag \\ 
				&= -\sum_{m=1}^{\infty} \frac{\psi(mx+1)}{m^r} + \lim_{t \to 1-r} (S(r,t)) - x^{r-1} \sum_{m=1}^{\infty} \frac{\psi(\frac{m}{x}+1)}{m^r}\notag \\
				&= -\sum_{m=1}^{\infty} \frac{\psi(mx+1)}{m^r} + (-1)^r x^{r-1} \sum_{m=1}^{\infty} \frac{\psi(\frac{m}{x}+1)}{m^{r}} +(1+(-1)^r)x^{r-1} \gamma \zeta(r) \notag\\
				&\quad  +\sum_{\ell=1}^{r-2} \left((-1)^{\ell+1} + \binom{r-1}{\ell} \right)  x^{\ell} \zeta(\ell+1) \zeta(r-\ell). \label{aftLim}
			\end{align}
		From \eqref{KLF-r}, we have
			\begin{align}
				\Theta(r,r,t,x) &= \frac{\zeta(r)\left(1 + x^{r-1} \right)}{(t-1+r)}  +  x^{r-1} \zeta(r)(\gamma-\log(x)) \notag \\
				& \quad + \gamma \zeta(r)  + x^{r-1} \sum_{\substack{k=1}}^{r-2} \binom{r-1}{k}  \frac{1}{ x^{k}} \zeta(r-k)\zeta(k+1)   + O_{x}(|t-1+r|).  \notag
			\end{align}
			Upon adding and subtracting suitable terms, we get,
			\begin{align}
				&\lim_{t \to 1-r} \left( \Theta(r,r,t,x) - \frac{\zeta(r) (1+x^{-t})}{t-1+r} \right) \notag \\
				&= \lim_{t \to 1-r} \left( \frac{\zeta(r)\left(x^{r-1}-x^{-t}\right)}{t-1+r} \right)  +  x^{r-1} \zeta(r)(\gamma-\log(x))  + \gamma \zeta(r)  + x^{r-1} \sum_{\substack{k=1}}^{r-2} \binom{r-1}{k}  \frac{1}{ x^{k}} \zeta(r-k)\zeta(k+1) \notag \\
				&= x^{r-1} \zeta(r) \log(x) +  x^{r-1} \zeta(r)(\gamma-\log(x)) + \gamma \zeta(r)  + x^{r-1} \sum_{\substack{k=1}}^{r-2} \binom{r-1}{k}  \frac{1}{ x^{k}} \zeta(r-k)\zeta(k+1) \notag \\
				&= x^{r-1} \zeta(r)\gamma + \zeta(r)  \gamma  +  \sum_{\substack{k=1}}^{r-2} \binom{r-1}{k}  x^{k} \zeta(r-k)\zeta(k+1), \label{Tconst}
			\end{align}
			where we have replaced $k$ by $r-1-k$ in the last step.
			Hence, from \eqref{aftLim} and \eqref{Tconst}, we see,
			\begin{align*}
				&-\sum_{m=1}^{\infty} \frac{\psi(mx+1)}{m^r} + (-1)^r x^{r-1} \sum_{m=1}^{\infty} \frac{\psi(\frac{m}{x}+1)}{m^{r}} +(1+(-1)^r)x^{r-1} \gamma \zeta(r) \notag\\
				&+\sum_{\ell=1}^{r-2} \left( (-1)^{\ell+1} + \binom{r-1}{\ell} \right)  x^{\ell} \zeta(\ell+1) \zeta(r-\ell) \\
				&= x^{r-1} \zeta(r)\gamma + \zeta(r)  \gamma  +  \sum_{\substack{k=1}}^{r-2} \binom{r-1}{k}  x^{k} \zeta(r-k)\zeta(k+1).
			\end{align*}
			On simplification, we get, with the convention that $\zeta(1)=\gamma$ as taken in \cite{vz}, we get
			\begin{align*}
				\sum_{m=1}^{\infty} \frac{\psi(mx+1)}{m^r} + (-x)^{r-1} \sum_{m=1}^{\infty} \frac{\psi(\frac{m}{x}+1)}{m^{r}} &= (-1)^{r} x^{r-1} \gamma \zeta(r) -\sum_{\ell=1}^{r-2} (-1)^{\ell}  x^{\ell} \zeta(\ell+1) \zeta(r-\ell)   -\zeta(r)  \gamma. \\
				&= -\sum_{\ell=0}^{r-1} (-x)^{\ell} \zeta(\ell+1) \zeta(r-\ell).
			\end{align*}
			Use the functional equation $\psi(y+1)=\psi(y)+ y^{-1}$ in both the summands on the left-hand side of the above equation and simplify to get \eqref{vz2term}.
	\end{proof}
	
	\begin{proposition}\label{VZtoKLF}
	Let $x>0$ and $r \in \mathbb{N}$, $r>1$. Let $F_r(x)$ and $\Theta(r,r,t,x)$ be as defined in \eqref{higherherglotz}, and \eqref{Tdef} respectively. The two-term functional equation of $F_r(x)$ given in \eqref{vz2term} implies Kronecker limit formula \eqref{KLF-r}  for $\Theta(r,r,t,x)$ around $t=1-r$.
	\end{proposition}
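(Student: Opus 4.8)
The plan is to run the proof of Proposition \ref{KLFtoVZ} in reverse. The crucial observation is that every step of that proof from Corollary \ref{Trrcor} down to equation \eqref{aftLim} — the decomposition \eqref{rlevel}, the simplification \eqref{srt0eqn} of $S(r,t)$, the evaluations \eqref{Teval}, \eqref{Teval1}, \eqref{Tlim}, \eqref{srtLim}, and \eqref{aftLim} itself — uses only the split \eqref{Tsplit}, the inversion \eqref{Tinv}, elementary binomial identities, and standard properties of the Hurwitz zeta and digamma functions; it does \emph{not} invoke \eqref{KLF-r}. So I would first record that \eqref{aftLim} holds unconditionally. The same decomposition also shows, with no functional equation, that $t=1-r$ is a \emph{simple} pole of $\Theta(r,r,t,x)$ with residue $\zeta(r)\bigl(1+x^{r-1}\bigr)$: in \eqref{rlevel} the term $S(r,t)$ is holomorphic there since every $\Theta$ occurring in \eqref{srt0eqn} is then evaluated off the hyperplanes \eqref{pole conditions}, while by \eqref{Teval} and \eqref{Teval1} the poles of $\Theta(0,r,t+r,x)$ and $x^{-t}\Theta(0,r,t+r,\tfrac{1}{x})$ come solely from the simple pole of the Hurwitz zeta function at $1$ and contribute residues $\zeta(r)$ and $x^{r-1}\zeta(r)$ respectively. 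Recall also that $\Theta(r,r,t,x)$ is meromorphic near $t=1-r$ by Matsumoto's continuation, cf. \eqref{pole conditions}.

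Next I would insert the hypothesis \eqref{vz2term} into the right-hand side of \eqref{aftLim}. Writing $F_r(x)=\sum_{n\geq1}\psi(nx+1)n^{-r}-\zeta(r+1)/x$ (and similarly for $F_r(1/x)$) via $\psi(y+1)=\psi(y)+y^{-1}$, one checks that \eqref{vz2term} is equivalent to
\begin{align*}
\sum_{m=1}^{\infty}\frac{\psi(mx+1)}{m^{r}}+(-x)^{r-1}\sum_{m=1}^{\infty}\frac{\psi\bigl(\tfrac{m}{x}+1\bigr)}{m^{r}}=-\sum_{\ell=0}^{r-1}(-x)^{\ell}\,\zeta(\ell+1)\,\zeta(r-\ell),
\end{align*}
with the convention $\zeta(1)=\gamma$. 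The two digamma sums on the right-hand side of \eqref{aftLim} combine into exactly the left-hand side above; after substituting, the alternating contributions cancel via $(-1)^{\ell}+(-1)^{\ell+1}=0$ and $(-1)^{r-1}+(-1)^{r}=0$, leaving
\begin{align*}
\lim_{t\to 1-r}\left(\Theta(r,r,t,x)-\frac{\zeta(r)\bigl(1+x^{-t}\bigr)}{t-1+r}\right)=\gamma\,\zeta(r)\bigl(1+x^{r-1}\bigr)+\sum_{\ell=1}^{r-2}\binom{r-1}{\ell}x^{\ell}\zeta(\ell+1)\zeta(r-\ell).
\end{align*}

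Finally I would translate this into the Laurent expansion of $\Theta(r,r,t,x)$. With $u=t-1+r$ one has $x^{-t}=x^{r-1}\bigl(1-u\log x+O(u^{2})\bigr)$, hence $\dfrac{\zeta(r)(1+x^{-t})}{u}=\dfrac{\zeta(r)(1+x^{r-1})}{u}-x^{r-1}\zeta(r)\log x+O(u)$. Combining this with the simple-pole statement and the limit just obtained gives
\begin{align*}
\Theta(r,r,t,x)=\frac{\zeta(r)(1+x^{r-1})}{t-1+r}+\gamma\,\zeta(r)\bigl(1+x^{r-1}\bigr)-x^{r-1}\zeta(r)\log x+\sum_{\ell=1}^{r-2}\binom{r-1}{\ell}x^{\ell}\zeta(\ell+1)\zeta(r-\ell)+O(|t-1+r|),
\end{align*}
and re-indexing the finite sum by $\ell\mapsto r-1-k$ (so that $\binom{r-1}{\ell}x^{\ell}\zeta(\ell+1)\zeta(r-\ell)\mapsto \binom{r-1}{k}x^{r-1-k}\zeta(r-k)\zeta(k+1)$) recovers \eqref{KLF-r} verbatim. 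The one genuinely delicate ingredient is the simple-pole claim in the first paragraph — that the singularity at $t=1-r$ carries no higher-order part — which is precisely what the decomposition \eqref{rlevel}, obtained from Corollary \ref{Trrcor}, \eqref{Tinv} and \eqref{Tsplit}, together with \eqref{Teval}--\eqref{Teval1}, is designed to supply; the rest is bookkeeping that mirrors, in reverse, the proof of Proposition \ref{KLFtoVZ}.
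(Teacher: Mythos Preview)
Your proposal is correct and follows essentially the same approach as the paper: both proofs reuse the decomposition \eqref{rlevel} and the evaluations \eqref{srtLim}, \eqref{Teval}--\eqref{Teval1} from Proposition~\ref{KLFtoVZ} to obtain the Laurent expansion of $\Theta(r,r,t,x)$ in terms of the digamma sums, and then substitute \eqref{vz2term} to produce \eqref{KLF-r}. Your presentation is slightly more streamlined in that you invoke \eqref{aftLim} directly rather than re-deriving the intermediate expansions \eqref{TTlim1}--\eqref{TTlim2}, but the argument is the same (one small wording slip: the two digamma sums in \eqref{aftLim} combine into the \emph{negative} of your displayed left-hand side, though your subsequent cancellation and final formula are correct).
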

\begin{proof}
	Form \eqref{rlevel}, we have,
	\begin{align}
		\Theta(r,r,t,x) = \Theta(0,r,t+r,x) + S(r,t) + x^{-t} \Theta(0,r,t+r,\tfrac{1}{x}), \label{start11}
	\end{align}
	where
	\begin{align*}
		S(r,t)= \sum_{\ell=1}^{r-1} \binom{r}{\ell} x^{\ell} \Theta(\ell,r-\ell,t+r,x).
	\end{align*}
	From \eqref{srtLim}, we have,
	\begin{align*}
		\lim_{t\to 1-r} S(r,t) &= (1+(-1)^r)x^{r-1} \left( \sum_{m=1}^{\infty} \frac{\psi(\frac{m}{x}+1)}{m^{r}}  +\gamma \zeta(r) \right) +\sum_{\ell=1}^{r-2} \left( (-1)^{\ell+1} + \binom{r-1}{\ell} \right)  x^{\ell} \zeta(\ell+1) \zeta(r-\ell).
	\end{align*}
	Since $S(r,t)$ is analytic in $t$ in a small neighbourhood of $t=1-r$, we can rewrite is as,
	\begin{align}
		S(r,t) &= (1+(-1)^r)x^{r-1} \left( \sum_{m=1}^{\infty} \frac{\psi(\frac{m}{x}+1)}{m^{r}} +\gamma \zeta(r) \right) \notag \\
		&\quad +\sum_{\ell=1}^{r-2} \left( (-1)^{\ell+1} + \binom{r-1}{\ell} \right)  x^{\ell} \zeta(\ell+1) \zeta(r-\ell) +O_{x}(|t-(1-r)|). \label{srtLim11}
	\end{align}
	From \eqref{rlevel}, we rearrange the terms and tend $t\to 1-r$ to get,
	\begin{align*}
		\lim_{t\to 1-r}\left(\Theta(0,r,t+r,x)-\frac{\zeta(r)}{t-(1-r)}\right) = - \sum_{m=1}^{\infty} \frac{\psi(mx+1)}{m^r}.
	\end{align*}
	In a small neighbourhood of $t=1-r$, we can rewrite is as,
	\begin{align}
		\Theta(0,r,t+r,x) = \frac{\zeta(r)}{t-(1-r)} - \sum_{m=1}^{\infty} \frac{\psi(mx+1)}{m^r} + O_{x}(|t-(1-r)|). \label{TTlim1}
	\end{align}
	Note, in a neighborhood of $t=1-r$,
	\begin{align}
		x^{-t} = x^{r-1} - x^{r-1} \log(x) (t-(1-r)) +  O_{x}(|t-(1-r)|^2). \label{xxlim1}
	\end{align}
	Replace $x$ by $\frac{1}{x}$ in \eqref{TTlim1} and use \eqref{xxlim1} to get, in a small neighbourhood of $t=1-r$, 
	\begin{align}
		x^{-t} \Theta(0,r,t+r,\tfrac{1}{x}) &= \bigg( x^{r-1}-x^{r-1}\log(x) (t-(1-r)) +  O_{x}(|t-(1-r)|^2) \bigg) \notag
		\\&\quad \times \left( \frac{\zeta(r)}{t-(1-r)} - \sum_{m=1}^{\infty} \frac{\psi(\frac{m}{x}+1)}{m^r} + O_{x}(|t-(1-r)|) \right) \notag \\
		&= \frac{x^{r-1}\zeta(r)}{t-(1-r)} - x^{r-1} \sum_{m=1}^{\infty} \frac{\psi(\frac{m}{x}+1)}{m^r} -x^{r-1} \log(x) \zeta(r)  +O_{x}(|t-(1-r)|). \label{TTlim2}
	\end{align}
	Using \eqref{srtLim11}, \eqref{TTlim1} and \eqref{TTlim2} in \eqref{start11}, we can see that, in a small neighbourhood of $t=1-r$, 
	\begin{align*}
		\Theta(r,r,t,x) &= (1+x^{r-1})\frac{\zeta(r)}{t-(1-r)} - \sum_{m=1}^{\infty} \frac{\psi(mx+1)}{m^r} -(-x)^{r-1} \sum_{m=1}^{\infty} \frac{\psi(\frac{m}{x}+1)}{m^{r}} + (1+(-1)^r)x^{r-1} \left( \gamma \zeta(r) \right) \\
		&\quad -x^{r-1} \log(x) \zeta(r)  +\sum_{\ell=1}^{r-2} \left( (-1)^{\ell+1} + \binom{r-1}{\ell} \right)  x^{\ell} \zeta(\ell+1) \zeta(r-\ell) +O_{x}(|t-(1-r)|).
	\end{align*}
	Now, use \eqref{vz2term} to get,
	\begin{align*}
		\Theta(r,r,t,x) &= (1+x^{r-1})\frac{\zeta(r)}{t-(1-r)} - \zeta(r+1) \left( (-x)^{r} -\frac{1}{x} \right) \notag \\
		&\quad + \sum_{\ell=1}^{r} \zeta(\ell) \zeta(r-\ell+1) (-x)^{\ell-1} + (1+(-1)^r)x^{r-1} \left( \gamma \zeta(r) \right) \\
		&\quad -x^{r-1} \log(x) \zeta(r)  +\sum_{\ell=1}^{r-2} \left( (-1)^{\ell+1} + \binom{r-1}{\ell} \right)  x^{\ell} \zeta(\ell+1) \zeta(r-\ell) +O_{x}(|t-(1-r)|).
	\end{align*}
	Simplify and use the convention $\zeta(1)=\gamma$ (as taken in \cite{vz}) to get \eqref{KLF-r}
\end{proof}
The following proposition illustrates Example \ref{ex3}.	

	\begin{proposition}
	The two-term functional equation satisfied by $F_{-1,0}(x)$ is equivalent to the Kronecker limit type formula for $\Theta(0,0,t,x)$ around $t = 0$. 
\end{proposition}
\begin{proof}
	
	Upon using each \eqref{Tsplit} and \eqref{Tinv} once, we can get,
	\begin{align}
		\Theta(0,0,w-1,x)=\Theta(-1,0,w,x)+ x^{1-w} \Theta(-1,0,w,\tfrac{1}{x}). \label{ESS3A}
	\end{align}	
	Firstly, we simplify $\Theta(-1,0,w,\tfrac{1}{x})$ as follows:
	\begin{align}
		\Theta(-1,0,w,\tfrac{1}{x}) &= \sum_{n=1}^{\infty} \sum_{m=1}^{\infty} \frac{n}{(n+ \tfrac{m}{x})^{w}} \notag\\
		&= x^{w} \sum_{n=1}^{\infty} n \zeta(w,nx) - \zeta(w-1), \notag \\
		&= x^{w} \sum_{n=1}^{\infty} n \left( \zeta(w,nx) - \frac{(nx)^{1-w}}{w-1} - \frac{(nx)^{-w}}{2} - \frac{1}{12}w(nx)^{-1-w} \right) \notag \\
		&\quad  + \frac{x}{w-1} \zeta(w-2) + \frac{1}{2} \zeta({w-1}) + \frac{w}{12x} \zeta({w}) -\zeta(w-1). \label{ESS4}
	\end{align}
	Rearrange a few terms in the above equation to get
	\begin{align}
		&\lim_{w \to 1} \left( \Theta(-1,0,w,\tfrac{1}{x}) -  \frac{x}{w-1} \zeta(w-2) - \frac{w}{12x} \zeta({w}) \right) \notag \\ 
		&=  -x \sum_{n=1}^{\infty} n \left( \psi(nx) -\log(nx) + \frac{1}{2nx} + \frac{1}{12n^2x^2} \right)+ \frac{1}{4}. \label{ESS5}
	\end{align}
	Using well-known Laurent series expansions, we get, around $w=1$,
	\begin{align}
		\Theta(-1,0,w,\tfrac{1}{x}) &= \frac{1-x^2}{12 x (w-1)} - x\sum_{n=1}^{\infty} n \left( \psi(nx) -\log(nx) + \frac{1}{2nx} + \frac{1}{12n^2x^2} \right) \notag \\
		& \quad +\frac{1}{12x} \left( 1+\gamma + 3x +x^2-12x^2 \log(A) \right) + O_x(|w-1|), \label{ESS6}
	\end{align}
	where $A$ is the \textit{Glaisher's constant}, and $\log(A) = \frac{1}{12} - \zeta'(-1)$.	Also noting that, around $w=1$, $x^{1-w} = 1 - \log(x) (w-1) + O(|w-1|^2)$. Hence, by substituting \eqref{ESS6} twice in \eqref{ESS3A}, we get, around $w=1$,
	\begin{align}\label{eqvl}
		&\Theta(0,0,w-1,x) \notag \\
		&= - \frac{1}{x} \sum_{n=1}^{\infty} n \left( \psi\left(\frac{n}{x}\right) -\log\left(\frac{n}{x}\right) + \frac{x}{2n} + \frac{x^2}{12n^2} \right) - x \sum_{n=1}^{\infty} n \left( \psi(nx) -\log(nx) + \frac{1}{2nx} + \frac{1}{12n^2x^2} \right) \notag \\
		&\quad  + \frac{1}{12x} \left( 2+\gamma+6x+2x^2+\gamma x^2-12 \log(A)-12x^2 \log(A)-\log(x)+x^2\log(x) \right) + O_x(|w-1|).
	\end{align}	
Hence, on replacing $w-1$ by $t$, one see that the required Kronecker limit type formula can be obtained from the two-term functional equation for $F_{-1,0}(x)$. 
\end{proof}
The above equivalence gives an alternative way to obtain the two-term functional equation for $F_{-1,0}(x)$ using the Kronecker limit type formula for $\Theta(0,0,t,x)$ around $t=0$ obtained the authors previously \cite{ss2}. We note this observation below as a corollary.
\begin{corollary}\label{newFEc}
	The function $F_{-1,0}(x)$ satisfies the following two-term functional equation:
	\begin{align}\label{newfe}
		F_{-1,0}(x)+\frac{1}{x^2}F_{-1,0}\left(\frac{1}{x}\right)
		&= \frac{1}{12 x^2} \left(3x +(1+\gamma-12 \log (A)) (1 + x^2) - (1- x^2) \log(x)\right),
	\end{align}
	where $A$ is the Glaisher's constant.
\end{corollary}
\begin{proof}
	Around $t=0$, from \cite[Equation (1.14)]{ss2}, we know,
	\begin{align}\label{from SS2}
		\Theta(0,0,t,x) = \frac{x}{12} + \frac{1}{4} + \frac{1}{12x} + O_{x}(|t|).
	\end{align}
	By substituting \eqref{from SS2} in \eqref{eqvl} and then tending $t \to 0$, we get \eqref{newfe}.
\end{proof}
We next explain Example \ref{ex4} through the following propostion. 
\begin{proposition}
	For $x>0$, we have,
	\begin{align}
		\sum_{n=1}^{\infty} n \psi^{(3)} \left(1+nx\right) + \frac{1}{x^5} \sum_{n=1}^{\infty} n \psi^{(3)} \left(1+\frac{n}{x}\right) = \frac{6}{x} \Theta(0,0,3,x). \label{example4st}
	\end{align}
\end{proposition}
\begin{proof}
From the split property \eqref{Tsplit}, we have,
\begin{align*}
	\Theta(0,0,3,x) = \Theta(-1,0,4,x) + x \Theta(0,-1,4,x).
\end{align*}
Similar to the calculations as done above, one can see that,
\begin{align*}
	\Theta(-1,0,4,x) = \sum_{n=1}^{\infty} \sum_{m=1}^{\infty} \frac{n}{(n+mx)^4} = \frac{1}{6x^4} \sum_{n=1}^{\infty} n \psi^{(3)} \left(1+\frac{n}{x}\right).
\end{align*}
Upon using $\Theta(0,-1,4,x) = x^{-4} \Theta(-1,0,4,\tfrac{1}{x})$ and simplifying, we get the required identity \eqref{example4st}
\end{proof}
Finally, we explain Example \eqref{ex5} through the following proposition.
\begin{proposition}
	The two-term functional equation for $F_{-3,0}(x)$ is equivalent to the Kronecker limit type formula satisfied by $\Theta(-2,-1,t,x)+ x^{1-t} \Theta\left(-2,-1, t,\frac{1}{x}\right)$ around $t=1$.
\end{proposition}
\begin{proof}
	Again, use \eqref{Tsplit} and \eqref{Tinv} to see that
	\begin{align}\label{useinv}
		\Theta(0,0,t-3,x)=\Theta(-3,0,t,x)+ 3 x \Theta(-2,-1,t,x)+3 x^{2-t} \Theta\left(-2,-1, t,\frac{1}{x}\right)+  x^{3-t}\Theta\left(-3, 0, t,\frac{1}{x}\right)
	\end{align}
	Note that,
	\begin{align}\label{seriesexp}
		\Theta\left(-3,0,t,\frac{1}{x}\right) 
		&= x^t \sum_{n=1}^{\infty} n^3 \zeta\left(t,nx\right) - \zeta(t-3) \nonumber \\
		&=x^t\sum_{n=1}^{\infty} n^3 \left( \zeta(t,nx) - \frac{(nx)^{1-t}}{t-1} - \frac{(nx)^{-t}}{2} - \frac{t (nx)^{-1-t} }{12}+\frac{(t+2)(t+1)t(nx)^{-3-t}}{720}\right) \nonumber \\
		&\quad+ \frac{x}{t-1} \zeta(t-4) - \frac{1}{2} \zeta({t-3}) + \frac{t}{12 x} \zeta({t-2})-\frac{(t+2)(t+1)t}{720 x^3} \zeta(t).
	\end{align}
	Substituting \eqref{seriesexp} in \eqref{useinv} and simplifying, we get
	\begin{align}\label{klfwithz}
		&3\left( x \Theta(-2,-1,t,x)+ x^{2-t} \Theta\left(-2,-1, t,\frac{1}{x}\right)\right) \nonumber\\&=-x^3\sum_{n=1}^{\infty} n^3 \left( \zeta(t,nx) - \frac{(nx)^{1-t}}{t-1} - \frac{(nx)^{-t}}{2} - \frac{t (nx)^{-1-t} }{12}+\frac{(t+2)(t+1)t(nx)^{-3-t}}{720}\right) \nonumber\\	
		&\quad -x^{-t}\sum_{n=1}^{\infty} n^3 \left( \zeta\left(t,\frac{n}{x}\right) - \frac{\left(\frac{n}{x}\right)^{1-t}}{t-1} - \frac{\left(\frac{n}{x}\right)^{-t}}{2} - \frac{t \left(\frac{n}{x}\right)^{-1-t} }{12}+\frac{(t+2)(t+1)t\left(\frac{n}{x}\right)^{-3-t}}{720}\right) \nonumber \\
		&\quad +\frac{(x^{-t}+x^3)(t+2)(t+1)t}{720} \zeta(t) - \frac{(x^{2-t}+x)t}{12} \zeta({t-2}) + \frac{(x^{3-t}+1)}{2}\zeta(t-3) \nonumber \\
		&\quad - \frac{\left(x^{4-t}+\frac{1}{x}\right)}{t-1} \zeta(t-4)  +\Theta(0,0,t-3,x).
	\end{align}
	From the definition of $F_{-k,0}(x)$ for $k=3$ as given in \eqref{lowerherglotz}, clearly, around $t=1$
	\begin{align}\label{lhminus3}
		&\sum_{n=1}^{\infty} n^3 \left( \zeta(t,nx) - \frac{(nx)^{1-t}}{t-1} - \frac{(nx)^{-t}}{2} - \frac{t (nx)^{-1-t} }{12}+\frac{(t+2)(t+1)t(nx)^{-3-t}}{720}\right) = -F_{-3,0}(x)+O_{x}(|t-1|).
	\end{align}
	Choose $r=0$, $s=0$, $t=t-3$, and, $\ell=3,$ in \cite[Theorem 2.3]{ss2} to get 
	\begin{align}\label{ss2klf}
		\Theta(0,0,t-3,x)=-\frac{x^3}{3}\zeta(-3)-\frac{1}{3x}\zeta(-3)+ 2 x \zeta^2(-1)+ O_{x}(|t-1|).
	\end{align}
	Using the well-known Laurent series expansions, we get,
	\begin{align}\label{zetaexp}
		&	\frac{(x^{-t}+x^3)(t+2)(t+1)t}{720} \zeta(t) - \frac{(x^{2-t}+x)t}{12} \zeta({t-2}) + \frac{(x^{3-t}+1)}{2}\zeta(t-3) - \frac{\left(x^{4-t}+\frac{1}{x}\right)}{t-1} \zeta(t-4)\nonumber \\
		&=\frac{1}{720x} \left( 10x^2+ (1+x^4) \left( 11+6\gamma - 720\zeta'(-3) \right) -6(1-x^4) \log(x) \right) + O_{x}(|t-1|).
	\end{align}
	From \eqref{klfwithz}, \eqref{lhminus3}, \eqref{ss2klf}, and \eqref{zetaexp}, we finally get
	\begin{align}
		&\Theta(-2,-1,t,x)+ x^{1-t} \Theta\left(-2,-1, t,\frac{1}{x}\right) = \frac{x^2}{3} \left(F_{-3,0}\left(x\right)+\frac{1}{x^4} F_{-3,0}\left(\frac{1}{x}\right) \right) \nonumber \\
		&\quad + \frac{1}{2160 x^2} \left( 20 x^2 +  (1 + x^4) (9+ 6\gamma - 720\zeta'(-3)) - 6 (1 - x^4) \log(x) \right) +	O_{x}(|t-1|),	\label{klfk3}
	\end{align}
	around $t=1$, which establishes the required equivalence.
\end{proof}
Since we have already derived the functional equation for $F_{-3,0}(x)$ in Theorem \ref{newFE}, we can obtain the Kronecker limit type formula for $\Theta(-2,-1,t,x)+ x^{1-t} \Theta\left(-2,-1, t,\frac{1}{x}\right)$ from the above proposition. In particular, taking $x=1$, we obtain the following special value of $\zeta_{\text{MT}} (r,s,t)$ at the \textit{point of indeterminacy} $(-2,-1,1)$. 
\begin{corollary}\label{newSP}
	Around $t=1$ we have
	\begin{align*}
		\zeta_{\textup{MT}}(-2,-1, t)=\frac{11}{1440}+O(|t-1|).
	\end{align*}
\end{corollary}
\begin{proof}
	Put $k=3$ in Theorem \ref{newFE} to see 
	\begin{align}\label{FEk3}
		F_{-3,0}\left(x\right)+\frac{1}{x^4} F_{-3,0}\left(\frac{1}{x}\right) = -\frac{1}{720x^4} \left( 5x^2 + 6\left(x^4 -1\right)\log(x) + (6\gamma - 720 \zeta'(-3))\left(x^4+1\right) \right).
	\end{align}
	On substituting \eqref{FEk3} in \eqref{klfk3}, one can get
	\begin{align}
		\Theta(-2,-1,t,x)&+ x^{1-t} \Theta\left(-2,-1, t,\frac{1}{x}\right)= \frac{3 + 5 x^2 + 3 x^4}{720x^2}+ O_{x}(|t-1|),
	\end{align}
	around $t=1$. 
	In particular, letting $x=1$ gives us the required special value.
\end{proof}
	\section{Modular relations: The Mordell--Tornheim zeta perspective}\label{SectionMTPer}
	In this section, we consider the class $F_{0,b}$ to make this study comprehensive. We alternatively derive the two-term identities of Ramanujan and Guinand as mentioned in the introduction. We then explain the underlying Mordell--Tornheim perspective and list the observations in Table \ref{table2}.  
	\subsection{Guinand's functional equations}
	\begin{theorem}
		Functional equations of Guinand given in \eqref{guinand2} and \eqref{guinand1} hold.
	\end{theorem}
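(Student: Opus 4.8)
The plan is to derive both of Guinand's identities \eqref{guinand2} and \eqref{guinand1} as direct corollaries of the split and inversion properties \eqref{Tsplit} and \eqref{Tinv} of $\Theta(r,s,t,x)$, together with elementary evaluations of $\Theta$ at special arguments. The starting observation is that $\psi^{(z-1)}(1+jx)$ is, up to an explicit constant, a Hurwitz-zeta value: indeed $\psi^{(k)}(a) = (-1)^{k+1} k! \, \zeta(k+1, a)$, so $\sum_{j\ge1} j^{-s}\psi^{(k)}(1+jx)$ is expressible in terms of $\sum_{j,n} j^{-s} (n+jx)^{-(k+1)}$, which is precisely $\Theta(k+1, s, \text{something}, x)$ after matching variables. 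So I would first set up, for an integer $z>2$, the identity $\psi^{(z-1)}(1+jx) = (-1)^{z}(z-1)!\,\zeta(z, 1+jx) = (-1)^z (z-1)! \sum_{n=1}^\infty (n+jx)^{-z}$, and hence
\begin{align*}
\sum_{j=1}^{\infty} \psi^{(z-1)}(1+jx) = (-1)^z (z-1)! \sum_{j=1}^{\infty}\sum_{n=1}^{\infty} \frac{1}{(n+jx)^z} = (-1)^z(z-1)!\,\Theta(0,0,z,x).
\end{align*}
Then $x^{z/2}\sum_j \psi^{(z-1)}(1+jx) = (-1)^z(z-1)!\, x^{z/2}\Theta(0,0,z,x)$, and by inversion \eqref{Tinv}, $x^{z}\Theta(0,0,z,x) = \Theta(0,0,z,1/x)$ (since $r=s=0$ makes the two sides symmetric), giving $x^{z/2}\Theta(0,0,z,x) = x^{-z/2}\Theta(0,0,z,1/x)$, which is exactly \eqref{guinand2} after multiplying back by $(-1)^z(z-1)!$. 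The main subtlety here is convergence/analytic continuation: $\Theta(0,0,z,x)$ has $r=s=0$, so $(r,s,t)=(0,0,z)$ does \emph{not} lie in the region $\mathscr{D}$ of \eqref{Dregion}; one must use the meromorphic continuation of $\Theta$ and check that $(0,0,z)$ avoids the polar hyperplanes \eqref{pole conditions}, i.e. $r+t = z \ne 1-\ell$, $s+t=z\ne 1-\ell$, $r+s+t = z \ne 2$, all of which hold for integer $z>2$. The inversion property \eqref{Tinv}, a priori stated on $\mathscr{D}$, then extends to this point by analytic continuation.

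For \eqref{guinand1}, the first-derivative case, the argument is the $z=2$ analogue but now $(0,0,2)$ sits exactly on the polar hyperplane $r+s+t=2$, so I expect this to be the main obstacle: one cannot simply substitute, and instead must track the pole and constant term, which is precisely the role played by a Kronecker-limit-type formula. The plan is to write $\psi'(1+jx) = \zeta(2,1+jx) = \sum_{n\ge1}(n+jx)^{-2}$ and note $\psi'(1+jx) - \tfrac{1}{jx} = \sum_{n\ge1}(n+jx)^{-2} - \tfrac1{jx}$, so that $\sum_j\big(\psi'(1+jx)-\tfrac1{jx}\big)$ is a regularized version of $\Theta(0,0,2,x)$; equivalently, using the split property \eqref{Tsplit} I would relate $\Theta(0,0,2,x)$ (formally) to $\Theta(-1,0,3,x) + x\,\Theta(0,-1,3,x)$ and push the parameters into a convergent regime, or better, express the regularized sum via $\lim_{t\to 0}\big(\text{something involving }\Theta(1,1,\cdot)\text{ or }\Theta(0,r,\cdot)\big)$ exactly as in the computations \eqref{Tlim}–\eqref{Teval1} of Proposition \ref{KLFtoVZ}. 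Concretely, $\sum_j (\psi'(1+jx) - \tfrac1{jx})$ should equal $\lim_{t\to 0}\big(\Theta(1,0,t+1,x) - \tfrac{1}{tx}\zeta(?)\big)$-type expression; matching the $\tfrac1{jx}$ subtraction to the Laurent tail of a Hurwitz zeta, one isolates the finite part. Applying inversion to this finite part produces the cross term $\tfrac1x\sum_j(\psi'(1+j/x)-\tfrac xj)$, and the mismatch between the two regularizing subtractions $\tfrac1{jx}$ versus $\tfrac xj$ is exactly what generates the $-\tfrac12\log x$ versus $-\tfrac12\log(1/x)$ discrepancy; carrying the $x^{-t}$ Laurent expansion $x^{-t} = 1 - t\log x + O(t^2)$ through the limit, just as in \eqref{xxlim1} and \eqref{TTlim2}, yields the logarithmic terms with the correct coefficient.

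So the structure of the proof I would write is: (i) record $\psi^{(z-1)}(1+jx) = (-1)^z (z-1)!\, \zeta(z,1+jx)$ and the identification $\sum_j\psi^{(z-1)}(1+jx) = (-1)^z(z-1)!\,\Theta(0,0,z,x)$; (ii) verify $(0,0,z)$ is a regular point of the meromorphically continued $\Theta$ for integer $z>2$, so that inversion \eqref{Tinv} applies and immediately gives \eqref{guinand2}; (iii) for \eqref{guinand1}, set up the regularized sum $\sum_j(\psi'(1+jx)-\tfrac1{jx})$ as a finite part of $\Theta$ near the pole at $t=0$ on the hyperplane $r+s+t=2$, compute that finite part by the Gamma-integral / Lerch-transcendent manipulations already used in Section \ref{Sec2} and in \eqref{Tlim}, (iv) apply inversion to the finite part and expand $x^{-t}$ to first order, collecting the constant and the $-\tfrac12\log x$ term; (v) rearrange into the symmetric form \eqref{guinand1}. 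The hard part is step (iii)–(iv): ensuring that the pole-subtraction is done consistently on both sides of the inversion, so that the stray $\log$ terms land with the right sign and coefficient; everything else is routine bookkeeping with $\Theta$'s elementary properties.
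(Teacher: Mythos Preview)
Your approach for \eqref{guinand2} is essentially the paper's, but you have one factual slip: the point $(r,s,t)=(0,0,z)$ with $z>2$ \emph{does} lie in $\mathscr{D}$, since $r+t=s+t=z>1$ and $r+s+t=z>2$. So no analytic continuation is needed there; the inversion \eqref{Tinv} applies directly in the region of absolute convergence, and your identification $\sum_{j\ge 1}\psi^{(z-1)}(1+jx)=(-1)^{z}(z-1)!\,\Theta(0,0,z,x)$ immediately yields \eqref{guinand2}. That part is fine, just simpler than you feared.

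For \eqref{guinand1} your overall idea (regularize the divergent $z=2$ case, apply inversion, and let the $x^{-t}$ expansion produce the $\log x$ term) matches the paper's, but your execution plan is more convoluted than necessary and contains a mis-located limit. The relevant pole for $(r,s)=(0,0)$ on the hyperplane $r+s+t=2$ is at $t=2$, not $t=0$; and there is no need to invoke the split \eqref{Tsplit} or pass to $\Theta(1,0,\cdot,x)$. The paper stays at $r=s=0$ throughout: starting from the identity $\sum_{m}\zeta(t,mx+1)=x^{-t}\sum_{m}\zeta(t,m/x+1)$ for $t>2$, it subtracts the Hurwitz asymptotic term $(mx)^{1-t}/(t-1)$ from each summand (making the sums converge for $t>1$), moves the resulting $\zeta(t-1)/(t-1)$ corrections across, and then lets $t\to 2$. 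The simple poles of $\zeta(t-1)$ on the two sides cancel against the vanishing factor $x^{1-t}-x^{-1}$, and their product yields exactly $-\log(x)/x$, which after multiplying by $x$ gives the $-\tfrac12\log x$ terms in \eqref{guinand1}. So your step (iii)--(iv) is right in spirit but should be carried out with the $t\to 2$ limit of $\Theta(0,0,t,x)$ directly, rather than via $\Theta(1,0,t+1,x)$ or the machinery of \eqref{Tlim}--\eqref{Teval1}.
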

	\begin{proof}
		From \cite[Equation 6.4.10, p.260]{Handbook}, for $j \in \mathbb{N}$, we can see,
		\begin{align}
			\psi^{(j)}(z) = (-1)^{j+1} j! \sum_{\ell=0}^{\infty} \frac{1}{(\ell+z)^{j+1}} = (-1)^{j+1} j! \zeta(j+1,z), \label{psi der}
		\end{align}
		where $\zeta(s,a)$ is the Hurwitz zeta function. From \eqref{Tinv}, we have
		\begin{align}
			\sum_{n=1}^{\infty} \sum_{m=1}^{\infty} \frac{1}{(n+mx)^{t}} = x^{-t} \sum_{n=1}^{\infty} \sum_{m=1}^{\infty} \frac{1}{(n+\tfrac{m}{x})^{t}}. \label{step2}
		\end{align}
		Replace $n$ by $n-1$ in the summands on both left-hand and right-hand side of the equation above and then use \eqref{psi der} to see that, for $t \in \mathbb{N}$ such that $t \geq 3$,
		\begin{align*}
			(-1)^{t}(t-1)!\sum_{m=1}^{\infty} \psi^{(t-1)}(1+mx)
			=(-1)^{t}(t-1)!x^{-t}\sum_{m=1}^{\infty} \psi^{(t-1)}\left(1+\frac{m}{x}\right).
		\end{align*}
		Simplify the above equation to get \eqref{guinand2}. We now prove the second identity of Guinand \eqref{guinand1}. From \eqref{Tinv}, we can also see that,
		\begin{align}
			\sum_{m=1}^{\infty} \zeta(t,mx+1) = x^{-t} 	\sum_{m=1}^{\infty} \zeta(t,\tfrac{m}{x}+1). \label{Hzeta}
		\end{align}
		From \cite[Equation 4.3]{Analogues}, we have,
		\begin{align*}
			\zeta(z,x) = \frac{x^{1-z}}{z-1} + O(x^{-z}).
		\end{align*}
		Adding and subtracting suitable terms from the summands on both left-hand and right-hand side of
		\eqref{Hzeta}, and simplifying, we get
		\begin{align}
			\sum_{m=1}^{\infty} \left( \zeta(t,mx+1)  - \frac{(mx)^{1-t}}{t-1} \right) + \frac{x^{1-t}\zeta(t-1)}{t-1} = x^{-t} \sum_{m=1}^{\infty} \left(\zeta(t,\tfrac{m}{x}+1) - \frac{\left(\frac{m}{x}\right)^{1-t}}{t-1} \right) + \frac{\zeta(t-1)}{x(t-1)}. \label{b4lim}
		\end{align}
		See the following limits hold, where we use \eqref{psi der} for the first one:
		\begin{align}
			\lim_{t \to 2} \left(  \zeta(t,y+1)  - \frac{y^{1-t}}{t-1}  \right) = \zeta(2,y+1) - \frac{1}{y} = \psi'(1+y) - \frac{1}{y}, \label{L1}
		\end{align}
		\begin{align}
			\lim_{t \to 2} \left( \left( x^{1-t} - \frac{1}{x} \right) \frac{\zeta(t-1)}{t-1} \right) = -\frac{\log(x)}{x} = - 2 \frac{\log(x)}{2x}= -  \frac{\log(x)}{2x} + \frac{\log(\frac{1}{x})}{2x} . \label{L2}
		\end{align}
		Tend $t \to 2$ in \eqref{b4lim}, use \eqref{L1} and \eqref{L2}, and simplify to get \eqref{guinand1}.
	\end{proof}
		\subsection{Ramanujan's functional equation}
		\begin{theorem}
		The first equality in the functional equation of Ramanujan given in \eqref{w1.26} holds.
	\end{theorem}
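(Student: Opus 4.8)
The plan is to run the same mechanism used for Guinand's identities, but now centred at $t=1$ and applied to $\Theta(0,0,t,x)$. Since $\sum_{n\ge 1}(n+mx)^{-t}=\zeta(t,1+mx)$, the inversion formula \eqref{Tinv} with $r=s=0$ gives, for $\Re(t)>2$,
\begin{align*}
	\Theta(0,0,t,x)=\sum_{m=1}^{\infty}\zeta(t,1+mx)=x^{-t}\sum_{m=1}^{\infty}\zeta\!\left(t,1+\tfrac{m}{x}\right).
\end{align*}
First I would continue $\Theta(0,0,t,x)$ to a punctured neighbourhood of $t=1$. From the two-term Euler--Maclaurin expansion $\zeta(t,1+mx)=\tfrac{(mx)^{1-t}}{t-1}-\tfrac{(mx)^{-t}}{2}+O_{\Re(t)}\!\left(m^{-\Re(t)-1}\right)$, subtracting $\tfrac{(mx)^{1-t}}{t-1}-\tfrac{(mx)^{-t}}{2}$ from each summand produces a series convergent for $\Re(t)>0$, and adding back the closed forms $\tfrac{x^{1-t}\zeta(t-1)}{t-1}$ and $\tfrac{x^{-t}\zeta(t)}{2}$ yields
\begin{align*}
	\Theta(0,0,t,x)=\sum_{m=1}^{\infty}\!\left(\zeta(t,1+mx)-\frac{(mx)^{1-t}}{t-1}+\frac{(mx)^{-t}}{2}\right)+\frac{x^{1-t}\zeta(t-1)}{t-1}-\frac{x^{-t}\zeta(t)}{2}.
\end{align*}
The poles of $\zeta(t,1+mx)$ and $\tfrac{(mx)^{1-t}}{t-1}$ at $t=1$ have equal residue and cancel, so every summand is holomorphic there; hence the whole pole of $\Theta(0,0,t,x)$ at $t=1$ is carried by the last two explicit terms and is simple (this is the hyperplane $r+t=1$ of \eqref{pole conditions}).

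Next I would let $t\to 1$. Using $\zeta(s,a)=\tfrac{1}{s-1}-\psi(a)+O(s-1)$, $\psi(1+y)=\psi(y)+y^{-1}$, $\zeta(0)=-\tfrac12$ and $\zeta'(0)=-\tfrac12\log(2\pi)$, each regularised summand above tends to $-\phi(mx)$, where $\phi(y)=\psi(y)+\tfrac{1}{2y}-\log y$, and one reads off the Laurent expansion
\begin{align*}
	\Theta(0,0,t,x)=-\frac{x+1}{2x}\cdot\frac{1}{t-1}+\left(-\sum_{m=1}^{\infty}\phi(mx)+\frac{1}{2}\log\frac{x}{2\pi}-\frac{\gamma-\log x}{2x}\right)+O(|t-1|).
\end{align*}
Now I would substitute this expansion, together with its image under $x\mapsto 1/x$, into the inversion identity $\Theta(0,0,t,x)=x^{-t}\,\Theta(0,0,t,1/x)$, expand $x^{-t}=\tfrac1x-\tfrac{\log x}{x}(t-1)+O\!\left((t-1)^2\right)$, and equate constant terms; the principal parts match automatically since $x^{-1}\cdot\bigl(-\tfrac{1+x}{2}\bigr)=-\tfrac{x+1}{2x}$. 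Simplifying the resulting identity of constants gives
\begin{align*}
	\sum_{m=1}^{\infty}\phi(mx)-\frac{1}{x}\sum_{m=1}^{\infty}\phi\!\left(\tfrac{m}{x}\right)=\frac{\gamma-\log(2\pi/x)}{2}-\frac{\gamma-\log(2\pi x)}{2x},
\end{align*}
which, multiplied through by $\sqrt{x}$ and rearranged, is exactly the first equality in \eqref{w1.26}.

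The step I expect to be the main obstacle is this analytic continuation: one must justify the term-by-term use of the Euler--Maclaurin expansion and, crucially, the interchange of $\lim_{t\to1}$ with the $m$-summation. This needs the \emph{two}-term asymptotics of the Hurwitz zeta function together with a tail bound $O\!\left(m^{-\Re(t)-1}\right)$ uniform on compact $t$-neighbourhoods of $1$; the one-term expansion $\zeta(s,a)=\tfrac{a^{1-s}}{s-1}+O(a^{-s})$ that suffices for Guinand's equations does not control the sum near $t=1$. Once these uniform estimates are in place, everything else is routine bookkeeping of the constant terms.
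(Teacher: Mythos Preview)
Your proof is correct and follows essentially the same route as the paper: both start from the inversion $\Theta(0,0,t,x)=x^{-t}\Theta(0,0,t,1/x)$, regularise $\sum_m\zeta(t,1+mx)$ by subtracting the two leading Euler--Maclaurin terms $\tfrac{(mx)^{1-t}}{t-1}-\tfrac{(mx)^{-t}}{2}$, and then pass to $t=1$. The only organisational difference is that the paper, after reaching the identity in terms of $\varphi(t,y)=\zeta(t,y)-\tfrac{y^{1-t}}{t-1}-\tfrac{y^{-t}}{2}$, cites \cite[Theorem~4.1 and Corollary~4.2]{Analogues} for the final limit, whereas you compute the Laurent expansion of $\Theta(0,0,t,x)$ at $t=1$ and equate constant terms yourself; the substance is the same.
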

	\begin{proof}
		From \cite[Equation 4.3]{Analogues}, we have,
		\begin{align*}
			\zeta(z,x) = \frac{x^{1-z}}{z-1} + \frac{1}{2}x^{-z} + O(x^{-z-1}).
		\end{align*}
		Adding and subtracting suitable terms from the summands on both left-hand side and right-hand side of \eqref{Hzeta}, and then using $\zeta(z,a+1) = \zeta(z,a) - a^{-z}$, we get,
		\begin{align*}
			&\sum_{m=1}^{\infty} \left( \zeta(t,mx) + \frac{(mx)^{1-t}}{t-1} + \frac{1}{2}(mx)^{-t} - \frac{(mx)^{1-t}}{t-1} - \frac{1}{2}(mx)^{-t} - (mx)^{-t} \right) \\
			&= x^{-t} \sum_{m=1}^{\infty} \left(\zeta(t,\tfrac{m}{x}) + \frac{\left(\frac{m}{x}\right)^{1-t}}{t-1} + \frac{1}{2}\left(\frac{m}{x}\right)^{-t} - \frac{\left(\frac{m}{x}\right)^{1-t}}{t-1} - \frac{1}{2}\left(\frac{m}{x}\right)^{-t} - \left(\frac{m}{x}\right)^{-t} \right).
		\end{align*}
		Let us use the following notation for ease:
		\begin{align*}
			\varphi(t,x) := \zeta(t,x) - \frac{x^{1-t}}{t-1} - \frac{1}{2}x^{-t}.
		\end{align*}
		After simplification, we get,
		\begin{align*}
			\left(\sum_{m=1}^{\infty} \varphi(t,mx)  - \frac{\frac{1}{x}}{t-1} \zeta(t-1) - \frac{1}{2} x^{-t} \zeta(t) \right)= x^{-t} \left( \sum_{m=1}^{\infty} \varphi(t,\tfrac{m}{x})  - \frac{x}{t-1} \zeta(t-1) - \frac{1}{2} \left(\frac{1}{x}\right)^{-t} \zeta(t) \right).
		\end{align*}
		Multiply both sides by $x^{\frac{t}{2}}$ to get the first equality of \cite[Theorem 4.1]{Analogues}. As shown in \cite[Corollary 4.2]{Analogues}, letting $t \to 1$, we get the first equality of the Ramanujan's transformation formula \eqref{w1.26}.
	\end{proof}


	We list a few key observations in table \ref{table2}. This highlights how one can obtain various modular relations from the generalized Mordell--Tornheim zeta function. We start with the necessary arguments for $\Theta$ in the first column. After using \eqref{Tsplit} the number of times as in column 3, we do the operation  on $t$ as in column 2, to obtain the result mentioned in column 4. For example, $\Theta(2,2,t,x)$ can give  Vlasenko-Zagier's functional equations \eqref{vz2term} for different $r$, based on the steps mentioned in columns 2 and 3. Observe that $\Theta(0,0,t,x)$ acts as the center between Ramanujan's functional equation \eqref{w1.26}, Guinand's functional equations \eqref{guinand2} and \eqref{guinand1}, and a new functional equation \eqref{newfe}, differing by the limiting value of $t$ alone, bringing the four results under a single umbrella. 
	\renewcommand{\arraystretch}{1.4}
	\begin{table}[h!]
	\begin{center}
		\begin{tabular}{ | c| c| c| c| } 
			\hline
			$\Theta$& Operation on $t$& No. of splits&Result obtained  \\
			\hline
			\hline
			\multirow{4}{5em}{$\Theta(0,0,t,x)$} & $t \in \mathbb{N}$, $t\geq3$ &0& Guinand \eqref{guinand2} \\ 
			\cline{2-4}
			& $t \to 2$ &0& Guinand \eqref{guinand1} \\ 
			\cline{2-4}
			& $t \to 1$ &0& Ramanujan \eqref{w1.26} \\ 
			\cline{2-4}
			& $t \to 0$ &1& New \eqref{newfe} \\ 
			\hline
			\multirow{2}{5em}{$\Theta(1 ,1,t,x)$} 
			& $t \to 1$ &0& Vlasenko-Zagier \eqref{vz2term} with $r=2$ \\ \cline{2-4}
			&$t \to 0$&1& Zagier \eqref{fe2} \\ 
			\hline
			\multirow{3}{5em}{$\Theta(2,2,t,x)$} 
			&$t \to 1$&0& Vlasenko-Zagier \eqref{vz2term} with $r=4$ \\ \cline{2-4}
			&\multirow{1}{2.5em}{$t \to 0$}&1& Vlasenko-Zagier \eqref{vz2term} with $r=3$ \\ \cline{2-4}
			&$t \to -1$&2& Vlasenko-Zagier \eqref{vz2term} with $r=2$\\ [0.5ex]
			\hline
		\end{tabular}
		\caption{The table lists a few connections between the Mordell--Tornheim zeta function and the modular relations discussed above.}
	\label{table2}
	\end{center}
	\end{table}
	\section{Concluding Remarks}
\noindent We conclude this paper with the following remarks and questions:
	\begin{enumerate}
\item As explained in Section \ref{introduction}, it is very natural to ask for a generalization of \eqref{KLFzagier} and \cite[Theorem 2]{vz}, with the Hurwitz variable $z$. The connection between the generalized higher Kronecker ``limit'' formula and the Kronecker limit type formula for $\Theta(r,r,t,x)$ could then be investigated further.
\item As discussed in the Section \ref{SectionMTPer}, the Mordell--Tornheim zeta function gives a new perspective of the two-term functional equations in the literature. This interplay between $\Theta(r,r,t,x)$ and the modular relations thus widens the scope for the study on $\Theta(r,r,t,x)$, and more generally $\Theta(r,s,t,x)$. For example, having higher order terms of $\Theta(r,r,t,x)$ in \ref{KLF-r}, will give us two-term explicit functional equations Ishibashi and Higher Ishibashi functions, showed in \cite[Theorem 3.4]{dss2}.
\item  We have obtained two term functional equations as corollaries of Theorem \eqref{molty}. Three-term analogue of \eqref{molty}  will be an extremely desirable result. This is because, such an identity will enable us to derive three-term functional equations such as \eqref{fe1} and \eqref{vz3term} and put them in the perspective. 
\item For any fixed $a, b \in \mathbb{N}$, it is clear that $F_{a,b}(1) \in \textbf{Z}$. It will be interesting to know if it holds for other values of $a$ and $b$. Specifically, if $F(1)=-\frac{1}{2}\gamma^2-\frac{\pi^2}{12}-\gamma_1$ lies in $\textbf{Z}_2$. Furthermore, can something be said about $F_{a,b}(2)$, or the other special values?
\item As explained in \cite[Theorem 1.32]{GilFresan}, there is an $\mathbb{Q}$-\textit{algebra} structure on $\textbf{Z}$. Analogously, it would be interesting  to know if there is an $\mathbb{Q}((x))$-\textit{algebra} structure on the space of HZ type functions $\textbf{F}$, i.e., if one can define a `suitable' product operation in the space of HZ type functions. 
\item Lastly, to build on the space of HZ type functions, one might begin by looking at $F_{a,b}$ for non-integer values of $a$.  
\end{enumerate}

\section{Acknowledgment}
The authors would like to acknowledge Atul Dixit for his support. The first author is supported by the Seed Money Grant CUK/ACAD-II/F-3787/26 of Central University of Karnataka and Prime Minister Early Career Research Grant (PM-ECRG)  ANRF/ECRG/2025/004852/PMS of Anusandhan National Research Foundation (ANRF), Government of India. The second author thanks the National Board of Higher Mathematics for the NBHM Post Doctoral Fellowship (File no. 0204/21(18)/2025-R\&D-II/16314) of the Government of India. \\

\noindent \textbf{Data Availability Statement:} The manuscript has no associated data.

\noindent \textbf{Conflict of Interest Statement:} On behalf of all authors, the corresponding author states that there is no conflict of interest.

\end{document}